\newtheorem{thm}{Theorem}
\newtheorem{lem}{Lemma}
\theoremstyle{definition}
\newtheorem{rem}{Remark}
\newtheorem{algo}{Algorithm}
\newcommand{\Div}[0]{\hspace{0.15mm}\mathrm{div}\,}
\newcommand{\Curl}[0]{\mathbf{curl}\,}
\newcommand{\bs}[1]{\boldsymbol{#1}}
\newcommand{\Lamspace}[0]{\bs{\Lambda}}
\newcommand{\Q}{\bs{Q}}
\newcommand{\Lam}[0]{\bs{\lambda}}
\newcommand{\Mu}[0]{\bs{\mu}}
\newcommand{\X}[0]{\bs{\xi}}
\newcommand{\jump}[1]{\left\llbracket #1 \right\rrbracket}
\newcommand{\Bf}[0]{\mathcal{B}}
\newcommand{\N}[0]{\bs{n}}
\newcommand{\Th}[0]{\mathcal{T}_h}
\newcommand{\Eh}[0]{\mathcal{E}_h}
\newcommand{\enorm}[1]{{\left\vert\kern-0.25ex\left\vert\kern-0.25ex\left\vert #1 
    \right\vert\kern-0.25ex\right\vert\kern-0.25ex\right\vert}}
\newcommand{\dx}[0]{\,\mathrm{d}x}
\newcommand{\ds}[0]{\,\mathrm{d}s}
\newcommand{\con}{\operatorname{con}, T}
\newcommand{\osc}{\operatorname{osc}}
\newcommand{\Cl}{\Pi_h}
\newcommand{\PS}{\Pi_0}
\newcommand{\IL}{I_{\mathcal{L}}}
\newcommand{\id}{\operatorname{id}}
\renewcommand{\P}{\mathbb{P}}
\newcommand{\rr}{\mathbb{R}}
\begin{document}

\author[Tom Gustafsson]{Tom~Gustafsson}
\author[Philip L. Lederer]{Philip~L.~Lederer}

\title{Mixed finite elements for Bingham flow in a pipe}

\begin{abstract}
  We consider mixed finite element approximations of viscous, plastic
  Bingham flow in a cylindrical pipe. A novel a priori and a posteriori error
  analysis is introduced which is based on a discrete mesh dependent
  norm for the normalized Lagrange multiplier. This allows proving 
  stability for various conforming finite elements. Numerical examples
  are presented to support the theory and to demonstrate adaptive mesh refinement.
\end{abstract}

\maketitle

\section{Introduction}

In this work we consider a viscous, plastic (Bingham) fluid which
behaves like a solid at low stresses and like a viscous fluid at high
stresses, see \cite{
MR1810507,MOSOLOV1966841,MOSOLOV1967609,Huilgol1995OnTD} and the more
recent review article \cite{MR3205441}. An everyday example is
toothpaste which extrudes from the tube as a solid plug when stress is
applied, remains solid in the middle of the plug and exhibits
fluid-like behavior near the tube wall. The velocity stays constant
within the solid part, i.e.~$\nabla u = \bs{0}$, and this condition is
enforced using a normalized vectorial Lagrange multiplier $\Lam$. Note
that the physical stress vector is $g \Lam$, and the shear stress is
given by its length $g |\Lam|$. Here $g>0$ is a given fixed threshold
value for the shear stress at which the solid becomes liquid when
exceeded. According to Section 8 in \cite{MR0521262} this gives the
strong formulation 
\begin{subequations}  \label{eq:exactprob}
\begin{alignat}{2}
  -\mu \Delta u - g\,\Div \Lam &= f \quad && \text{in $\Omega$}, \label{eq:exactprob_one}\\
  \Lam \cdot \nabla u &= |\nabla u| \quad && \text{in $\Omega$,} \label{eq:exactprob_three}\\
  |\Lam | &\leq 1\quad && \text{in $\Omega$,} \label{eq:exactprob_three1}\\
  u&=0 \quad && \text{on $\partial \Omega$}, \label{eq:exactprob_two}
\end{alignat}
\end{subequations}
where $\mu$ is the viscosity of the considered fluid, and $f$ describes
the pressure drop along the pipe. Note that in practice the pressure
drop is often constant over the cross section. However, in this work
we assume that $f \in L^2(\Omega)$. 

Several contributions in the field of Bingham-type fluid
computations were made by Glowinski and collaborators;
cf.~\cite{MR737005, he2000steady, MR0520279}. In the latter a linear
approximation was introduced and a suboptimal a priori error estimate
for the velocity was given. An optimal linear convergence of some low
order (mixed) methods was discussed in the works
\cite{MR3475655,MR449119}. 
Glowinski also provided an exact solution for the model problem of a
circular domain with a constant load. Since even this simple geometry and loading leads to a solution that is only in $H^{5/2-\epsilon}(\Omega)$, $\epsilon > 0$, a higher regularity for general Bingham-type flows is unlikely. 
Due to this non-smooth nature of the problem, the use of adaptive error
control seems highly desirable, see for example \cite{MR1992792} and
the references therein, and more recently in \cite{MR3874780}.
In particular, we would like to highlight the work \cite{MR2855533}
where the authors introduced the same a posteriori estimator that we
derive in this work. More precisely they bound the velocity error in
terms of the load, the discrete velocity $u_h$ and the discrete
approximation of the Lagrange multiplier~$\Lam_h$,
\begin{align*}
  \| u - u_h \|_1 \lesssim \eta(f,u_h,\Lam_h),
\end{align*}
where $\eta$ is some estimator. Although their definition of $\eta$ is
reasonable, proper error control is not guaranteed since their
stability analysis does not include a bound for $\Lam_h$. The main
problem can be traced back to the lack of a Babu\v{s}ka--Brezzi
condition for the considered linear Lagrangian velocity space and the
space of element-wise vector-valued constants for the Lagrange
multiplier (which is the same discretization used in
\cite{MR0520279}). As a result discrete stability for both $u_h$ and
$\Lam_h$ is not present, i.e.~the estimator $\eta$ could be arbitrarily large. 

The main contribution of this work is a novel stability and error
analysis of a mixed finite element approximation of
\eqref{eq:exactprob}. For this we build upon the ideas from one of the
authors work \cite{MR3723328} on obstacle problems, and the
corresponding references therein. Our analysis is based on proving a
discrete Babu\v{s}ka--Brezzi condition using a mesh dependent norm;
cf.~\cite{MR1076437}. This allows us to consider various finite element
pairs suitable for approximating \eqref{eq:exactprob}. Beside
continuous and discrete stability (see Section~\ref{sec::contstab} and
Section~\ref{sec::FEM}) we derive an a priori error estimate and
discuss linear convergence for sufficiently regular solutions in
Section~\ref{sec::apriori}. Our approach then further allows deriving
a residual based a posteriori error estimator (see
Section~\ref{sec::aposteriori}) which is globally and locally
efficient. We want to emphasize that our analysis gives full control
for both the error of the velocity and the error of the (divergence of
the) Lagrange multiplier. We conclude the work in
Section~\ref{sec::num} where we give insight on how to solve the
discrete system and provide several numerical examples to validate our
analysis.

\section{Continuous stability} \label{sec::contstab}
The weak formulation of \eqref{eq:exactprob} finds $u \in V$ and $\Lam \in
\Lamspace$ such that
\begin{subequations} \label{eq::weakform}
\begin{alignat}{2}
  (\mu \nabla u, \nabla v) + (g \nabla v, \Lam) &= (f, v) \quad && \forall v \in V, \label{eq:first}\\
  (g\nabla u, \Mu - \Lam) &\leq 0 \quad && \forall \Mu \in \Lamspace, \label{eq:second}
\end{alignat}
\end{subequations}
where $V = H^1_0(\Omega)$ and $\Lamspace = \{ \Mu \in L^2(\Omega,
\rr^2) : |\Mu| \leq 1~\text{a.e.~in $\Omega$}\}$; see
also~\cite{he2000steady}. 
Combining \eqref{eq:first} and \eqref{eq:second} gives
\begin{equation}
  \label{eq:weak1}
  (\mu\nabla u, \nabla v) + (g\nabla v, \Lam) + (g\nabla u, \Mu - \Lam)  \leq (f, v)
\end{equation}
for every $(v,  \Mu) \in V \times \Lamspace$. Note that
the solution of \eqref{eq:weak1} is unique up to a divergence-free
component, i.e.~$\Lam + \X$ is also a solution if $\Div \X = 0$. 
For the stability analysis we choose
the standard $H^1$-norm for the space $V$, and the dual norm $\|
\Div(\cdot) \|_{-1}$ for $\Lamspace$. Note that the latter is strictly
speaking not a norm, but only a seminorm. Thus, all error estimates
for $\Lam$ from this work will not prove any convergence of the
corresponding approximation in a strong sense, but only show
convergence of its distributional divergence. 


To simplify the notation we will from now on set $g=\mu=1$. 
In the following we use the shorthand notation
\begin{equation}
  \Bf(w, \X; v, \Mu) = (\nabla w, \nabla v) + (\nabla v, \X) + (\nabla w, \Mu).
\end{equation}
Using Cauchy-Schwarz and the continuity of the duality pairing
\begin{align*}
  (\nabla v, \X) = \langle \Div \X, v\rangle_{-1} \le \| v \|_1 \| \Div \X \|_{-1} \quad \forall v \in V, \X \in \Q, 
\end{align*}
with $\Q = L^2(\Omega, \rr^2)$, one immediately sees that $\Bf$ is continuous, i.e.~we have 
 \begin{align}  \label{eq:Bcont}
  \Bf(w, \X; v, \Mu) \lesssim (\| w \|_1 + \| \Div \X  \|_{-1})(\| v \|_1 + \| \Div \Mu \|_{-1}).
 \end{align}
Throughout the paper we write $a \lesssim b$ (or $a \gtrsim b$)
if there exists a constant $C>0$, independent of the finite element mesh,
such that $a \leq C b$ (or $a \geq C b$).

\begin{thm} \label{th:contstab}
 For every $(w, \X) \in V \times \Lamspace$ there exists a function $r \in V$ such
  that
  \begin{equation}
    \label{eq:stab1}
    \Bf(w, \X; r, -\X) \gtrsim (\| w \|_1 + \| \Div \X \|_{-1})^2
  \end{equation}
  and
  \begin{equation}
    \label{eq:stab2}
    \|r\|_1 \lesssim \| w \|_1 + \| \Div \X \|_{-1}.
  \end{equation}
\end{thm}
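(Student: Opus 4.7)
The inf-sup-type estimate \eqref{eq:stab1}--\eqref{eq:stab2} is exactly the ``continuous Babu\v{s}ka--Brezzi condition'' for the bilinear form $\Bf$ in the $(\|\cdot\|_1, \|\Div\cdot\|_{-1})$ pair of norms, and the natural strategy is to construct $r$ as a perturbation of the obvious choice $r=w$. Testing $\Bf$ at $(r,-\X)=(w,-\X)$ yields only $(\nabla w,\nabla w) = \|\nabla w\|_0^2$, which controls $\|w\|_1$ via Poincar\'e but gives nothing on $\|\Div\X\|_{-1}$. Hence the plan is to add a correction $-\alpha z$ with a suitable $z \in V$ that activates the $\|\Div\X\|_{-1}$ term.

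To build $z$, I would invoke the Riesz representation theorem for the linear functional $\Div\X \in H^{-1}(\Omega)$ on $V$ equipped with the $H^1$ inner product. This produces $z\in V$ satisfying $\|z\|_1=\|\Div\X\|_{-1}$ and $\langle \Div\X,z\rangle_{-1}=\|\Div\X\|_{-1}^2$. Integration by parts (valid since $v \in H^1_0$) gives $-(\X,\nabla z)=\|\Div\X\|_{-1}^2$. Setting $r=w-\alpha z$, a direct expansion yields
\begin{equation*}
  \Bf(w,\X;r,-\X) = \|\nabla w\|_0^2 - \alpha(\nabla w,\nabla z) + \alpha\|\Div\X\|_{-1}^2,
\end{equation*}
where the cross term involving $\X$ cancels because we test the multiplier slot with $-\X$ itself.

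The remaining step is a standard Young's inequality absorption on the cross term $\alpha(\nabla w,\nabla z)$, estimated by $\|w\|_1\|\Div\X\|_{-1}$, together with the Poincar\'e inequality $\|\nabla w\|_0^2 \gtrsim \|w\|_1^2$. Choosing $\alpha$ small enough depending only on the Poincar\'e constant, the negative contribution is absorbed into $\|\nabla w\|_0^2$ while keeping a positive coefficient in front of $\|\Div\X\|_{-1}^2$. One then concludes via $a^2+b^2 \ge \tfrac12(a+b)^2$ to obtain \eqref{eq:stab1}. The continuity bound \eqref{eq:stab2} follows immediately from the triangle inequality, $\|r\|_1 \le \|w\|_1 + \alpha\|z\|_1 = \|w\|_1+\alpha\|\Div\X\|_{-1}$.

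There is no real obstacle here: the argument is essentially a Fortin-style selection of a test function. The only subtle point worth spelling out is that $\Lamspace \subset \Q$ so $\Div\X \in H^{-1}$ is well defined as a distribution, and the constraint $|\X|\le 1$ is not used at all in this step--it plays no role in the stability of $\Bf$ and will only enter later through the variational inequality \eqref{eq:second}.
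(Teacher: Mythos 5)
Your proposal is correct and follows essentially the same route as the paper: the paper's auxiliary function $q$ solving $(\nabla q,\nabla z)=(\X,\nabla z)$ is precisely the Riesz representative of $\Div\X$ (with respect to the $H^1_0$ gradient inner product rather than the full $H^1$ inner product, a difference that is immaterial by Friedrichs' inequality), and the subsequent cancellation of the $(\nabla w,\X)$ terms, the Young absorption of the cross term, and the triangle-inequality bound for $\|r\|_1$ all match. The only cosmetic discrepancy is the sign of the correction, which merely reflects the paper's sign convention $(\nabla v,\X)=\langle\Div\X,v\rangle_{-1}$ versus your distributional one.
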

\begin{proof}
  We have
  \begin{equation}
    \label{eq:stabpf0}
    \Bf(w, \X; w, -\X) = \| \nabla w \|_0^2.
  \end{equation}
  Moreover, let $q \in V$. Then
  \begin{equation}
    \label{eq:stabpf1}
    \Bf(w, \X; q, 0) = (\nabla w, \nabla q) + (\X, \nabla q).
  \end{equation}
  If $q$ is chosen as the solution to
  \begin{equation}
    \label{eq:stabpf2}
    (\nabla q, \nabla z) = (\X, \nabla z) \quad \forall z \in V,
  \end{equation}
  then testing with $z = q$ gives $(\X, \nabla q) = \| \nabla q
  \|_0^2$. By the definition of $\| \cdot \|_{-1}$ and Cauchy--Schwarz
  we have
  \begin{equation}
    \label{eq:stabpf3}
    \| \Div \X \|_{-1} = \sup_{v \in V} \frac{(\X, \nabla v)}{\|v\|_1} \leq \| \nabla q \|_0.
  \end{equation}
  Now  choose $r := w + q$. Combining \eqref{eq:stabpf0}, \eqref{eq:stabpf1} and \eqref{eq:stabpf3},
  and applying Cauchy--Schwarz and Young's inequalities on \eqref{eq:stabpf1}
  gives the first result \eqref{eq:stab1}. 

  For \eqref{eq:stab2} the triangle inequality gives $\| r \|_1 \le
  \|w\|_1 + \|q\|_1$. Using Friedrichs inequality we get
  \begin{align*}
    \| q \|^2_1 \lesssim \| \nabla q \|^2_0 = (\nabla q, \X) = \langle q, \Div \X \rangle_{-1} \le \| q \|_1 \| \Div \X \|_{-1},
  \end{align*}
  which concludes the proof. 
\end{proof}

\section{Finite element method}\label{sec::FEM}

Let $V_h \subset V$ and $\Q_h \subset \Q$.
We define the discrete subspace $\Lamspace_h \subset \Q_h$ as $\Lamspace_h = \{ \Mu_h \in \Q_h : | \Mu_h | \leq 1~\text{a.e.~in $\Omega$}\}$.
Let $\Th$ be a shape regular triangulation of $\Omega$
and $h_T$ denote the diameter of $T \in \Th$. Further let $\Eh$ denote the set of edges with length $h_E$ for all $E \in \Eh$, for which we have, due to shape regularity, $h_E \sim h_T$.
The discrete norm for $\Mu_h \in \Lamspace_h$ is
\begin{equation}  \label{eq:meshdepnorm}
  \| \Div \Mu_h \|_{-1,h}^2 = \sum_{T \in \Th} h_T^2 \| \Div \Mu_h \|_{0,T}^2 + \sum_{E \in \Eh} h_E \|\!\jump{\Mu_h \cdot \N}\!\|_{0,E}^2,
\end{equation}
where $\jump{\cdot}$ is the usual jump operator. 
The discrete formulation reads: find $(u_h, \Lam_h) \in V_h \times \Lamspace_h$
such that
\begin{equation} \label{eq:discproblem}
  \Bf(u_h, \Lam_h; v_h, \Mu_h - \Lam_h) \leq (f,v_h) \quad \forall (v_h, \Mu_h) \in V_h \times \Lamspace_h.
\end{equation}
As in the continuous setting, we can prove stability of the mixed method \eqref{eq:discproblem} 
if the following Babu\v{s}ka--Brezzi condition is valid
\begin{equation}
  \label{eq:disclbb}
  \sup_{v_h \in V_h} \frac{(\X_h, \nabla v_h)}{\|v_h\|_1} \gtrsim \| \Div \X_h \|_{-1} \quad \forall \X_h \in \Q_h.
\end{equation}

\begin{thm} \label{thm:discretestability}
  Suppose $V_h$ and $\Lamspace_h$ satisfy \eqref{eq:disclbb}. Then for every
  $(w_h, \X_h) \subset V_h \times \Q_h$ there exists a function $r_h \in V_h$ such
  that
  \begin{equation}
    \label{eq:stab1_disc}
    \Bf(w_h, \X_h; r_h, -\X_h) \gtrsim (\| w_h \|_1 + \| \Div \X_h \|_{-1})^2,
  \end{equation}
  and
  \begin{equation}
    \label{eq:stab2_disc}
    \|r_h\|_1 \lesssim \| w_h \|_1 + \| \Div \X_h \|_{-1}.
  \end{equation}
\end{thm}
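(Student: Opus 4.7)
The plan is to mirror the proof of Theorem~\ref{th:contstab}, using the assumed Babu\v{s}ka--Brezzi condition \eqref{eq:disclbb} to replace the trivial supremum over all of $V$ that appeared in step \eqref{eq:stabpf3} of the continuous proof. Concretely, I would take $r_h := w_h + q_h$, where $q_h \in V_h$ is the discrete Galerkin approximation defined by
\[
(\nabla q_h, \nabla z_h) = (\X_h, \nabla z_h) \qquad \forall z_h \in V_h,
\]
which is well-posed by Friedrichs' inequality. This is the direct discrete analogue of \eqref{eq:stabpf2}.

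Next I would decompose $\Bf(w_h, \X_h; r_h, -\X_h)$ in the same way as in the continuous proof. The first piece is
\[
\Bf(w_h, \X_h; w_h, -\X_h) = \|\nabla w_h\|_0^2,
\]
and the second, using the definition of $q_h$ with $z_h = q_h$, is
\[
\Bf(w_h, \X_h; q_h, 0) = (\nabla w_h, \nabla q_h) + \|\nabla q_h\|_0^2.
\]
A Young's inequality on the cross term then gives
\[
\Bf(w_h, \X_h; r_h, -\X_h) \geq \tfrac{1}{2}\|\nabla w_h\|_0^2 + \tfrac{1}{2}\|\nabla q_h\|_0^2.
\]

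At this point the key step enters: I would invoke the discrete inf-sup condition \eqref{eq:disclbb} together with the Galerkin identity to obtain
\[
\|\Div \X_h\|_{-1} \lesssim \sup_{v_h \in V_h} \frac{(\X_h, \nabla v_h)}{\|v_h\|_1} = \sup_{v_h \in V_h} \frac{(\nabla q_h, \nabla v_h)}{\|v_h\|_1} \leq \|\nabla q_h\|_0,
\]
so that $\|\nabla q_h\|_0^2 \gtrsim \|\Div \X_h\|_{-1}^2$. Combined with Friedrichs on $w_h$ this yields \eqref{eq:stab1_disc}. For \eqref{eq:stab2_disc}, I bound $\|r_h\|_1 \leq \|w_h\|_1 + \|q_h\|_1$ and repeat the continuous argument for $\|q_h\|_1$: testing $q_h$ against itself and using duality gives $\|\nabla q_h\|_0^2 = (\X_h, \nabla q_h) = \langle \Div \X_h, q_h\rangle_{-1} \leq \|\Div \X_h\|_{-1}\|q_h\|_1$, which together with Friedrichs implies $\|q_h\|_1 \lesssim \|\Div \X_h\|_{-1}$.

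The main obstacle is more conceptual than technical: one must recognize that the only step of the continuous proof that genuinely fails after restricting to discrete spaces is the bound \eqref{eq:stabpf3}, whose derivation relied on the supremum ranging over all of $V$. The assumption \eqref{eq:disclbb} is precisely what is needed to recover this inequality after the Galerkin projection, so once \eqref{eq:disclbb} is available the rest of the argument transcribes verbatim from the proof of Theorem~\ref{th:contstab}.
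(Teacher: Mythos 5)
Your proposal is correct and is precisely the argument the paper intends: its proof consists of the single remark that one repeats the proof of Theorem~\ref{th:contstab} with the discrete Galerkin lift $q_h$ and uses \eqref{eq:disclbb} in place of the intermediate step \eqref{eq:stabpf3}, which is exactly what you have written out. No gaps.
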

\begin{proof}
  This is similar to the proof of
  Theorem~\ref{th:contstab} but using \eqref{eq:disclbb} in the
  intermediate step \eqref{eq:stabpf3}.
\end{proof}
An explicit proof of condition \eqref{eq:disclbb} might be difficult depending on 
the choice of $V_h$ and $\Q_h$. To this end the following theorem shows that
it is sufficient to prove a discrete condition using the mesh dependent norm \eqref{eq:meshdepnorm}.
\begin{thm} \label{thm:discretelbb}
  If the discrete Babu\v{s}ka--Brezzi condition 
  \begin{align}
  \label{eq:disclbb_meshnorm}  
    \sup_{v_h \in V_h} \frac{(\X_h, \nabla v_h)}{\|v_h\|_1} \gtrsim \| \Div \X_h \|_{-1,h} \quad \forall \X_h \in \Q_h,
  \end{align}
  holds true, then the discrete spaces also fulfill \eqref{eq:disclbb}.
\end{thm}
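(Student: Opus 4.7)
The plan is to show that the mesh dependent norm dominates the dual norm on the discrete space, i.e.\ $\| \Div \X_h \|_{-1} \lesssim \| \Div \X_h \|_{-1,h}$ for all $\X_h \in \Q_h$, which combined with the hypothesis immediately yields \eqref{eq:disclbb}. More precisely, it suffices to prove that
\begin{equation*}
  (\X_h, \nabla v) \lesssim \| \Div \X_h \|_{-1,h}\,\|v\|_1 + S_h(\X_h)\,\|v\|_1 \qquad \forall v \in V,
\end{equation*}
where $S_h(\X_h) := \sup_{v_h \in V_h} (\X_h,\nabla v_h)/\|v_h\|_1$, since taking the supremum over $v\in V$ and using \eqref{eq:disclbb_meshnorm} to absorb the first summand into $S_h(\X_h)$ produces \eqref{eq:disclbb}.

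The workhorse is a Clément (or Scott--Zhang) type quasi-interpolant $\Cl \colon V \to V_h$ preserving homogeneous boundary conditions, with the standard local estimates
\begin{equation*}
  \|v-\Cl v\|_{0,T} \lesssim h_T \|v\|_{1,\omega_T},\qquad \|v-\Cl v\|_{0,E} \lesssim h_E^{1/2}\|v\|_{1,\omega_E},\qquad \|\Cl v\|_1 \lesssim \|v\|_1,
\end{equation*}
and with finite overlap of the element and edge patches $\omega_T,\omega_E$. Writing $v = \Cl v + (v-\Cl v)$, the discrete part is trivially controlled by $(\X_h,\nabla \Cl v) \le S_h(\X_h)\|\Cl v\|_1 \lesssim S_h(\X_h)\|v\|_1$.

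The remaining term $(\X_h,\nabla(v-\Cl v))$ is handled by elementwise integration by parts. On each $T$ this produces a volume contribution with $\Div \X_h$ and boundary contributions that assemble, using that $v-\Cl v$ vanishes on $\partial \Omega$, into edge contributions with the jumps $\jump{\X_h\cdot\N}$ over $E\in\Eh$. Applying Cauchy--Schwarz together with the local estimates above and the finite overlap of the patches gives
\begin{equation*}
  |(\X_h,\nabla(v-\Cl v))| \lesssim \Bigl( \sum_{T\in\Th} h_T^2\|\Div\X_h\|_{0,T}^2 + \sum_{E\in\Eh} h_E \|\!\jump{\X_h\cdot\N}\!\|_{0,E}^2\Bigr)^{1/2} \|v\|_1 = \|\Div\X_h\|_{-1,h}\,\|v\|_1,
\end{equation*}
which is exactly the bound needed to close the argument.

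The main technical point is the choice and properties of $\Cl$: it must map into $V_h$, respect the homogeneous Dirichlet boundary condition so that the boundary edges in the integration by parts drop out, and satisfy the $L^2$ interpolation estimates on both elements and edges. All of this is classical for the Clément/Scott--Zhang operators on shape-regular meshes, and there are no further obstacles since the rest of the argument is linear algebra combined with standard scaling. The only subtlety to flag is that the hypothesis \eqref{eq:disclbb_meshnorm} is used in a single absorption step at the very end, and the estimate above does not require any inverse inequality on $\X_h$, so the conclusion holds without any assumption on the polynomial degree of $\Q_h$.
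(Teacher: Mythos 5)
Your proof is correct and follows essentially the same route as the paper: both hinge on the Cl\'ement interpolant, elementwise integration by parts, and the local interpolation estimates to establish $\| \Div \X_h \|_{-1} \lesssim \| \Div \X_h \|_{-1,h} + \sup_{v_h \in V_h} (\X_h, \nabla v_h)/\|v_h\|_1$, after which the hypothesis \eqref{eq:disclbb_meshnorm} absorbs the mesh-dependent term. The only (cosmetic) difference is that you bound $(\X_h, \nabla v)$ for arbitrary $v \in V$ and take the supremum at the end, whereas the paper tests with $\Cl r$ for a near-maximizing $r$ supplied by the continuous stability theorem and then closes with a convex-combination argument; the two are algebraically equivalent.
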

\begin{proof}
  Let $\Cl: L^2(\Omega) \rightarrow
  V_h$ be the Cl\'ement quasi interpolation operator \cite{MR0400739} with the
   stability and interpolation properties
  \begin{subequations} \label{eq:clement}
  \begin{align}
  \| \Cl v \|_1 &\le C_s \| v \|_1 \quad \forall  v \in V, \\
    \Big (\sum_{T \in \Th} h_T^{-2} \| v - \Cl v\|_{0,T}^2 + \sum_{E \in \Eh} h_E^{-1} \| v - \Cl v\|_{0,E}^2 \Big)^{1/2} & \le C_i \| v \|_1 \quad \forall v \in V.
  \end{align}
\end{subequations}
First we show the existence of constants $C_1$ and $C_2$ such that
  \begin{align} \label{eq:toprove}
    \sup\limits_{v_h \in V_h} \frac{(\X_h,\nabla v_h)}{\| v_h \|_1} \ge C_1 \| \Div \X_h \|_{-1} - C_2 \| \Div \X_h \|_{-1, h}.
  \end{align}
For this we choose an arbitrary $\X_h \in \Lamspace_h$. 
  By Theorem~\ref{th:contstab} there exists a function $r \in V$ such that 
  \begin{align*}
    (\X_h,\nabla r) \ge C' \| r\|_1 \| \Div \X_h \|_{-1}. 
  \end{align*}
  Using the Cl\'ement operator we have for the difference 
  \begin{align*}
    (\X_h,\nabla (\Cl r - r)) 
    =& \sum_{T \in \Th} (\X_h,\nabla (\Cl r - r))_T  \\
    =& - \sum_{T \in \Th} (\Div \X_h, \Cl r - r )_T + (\Cl r - r, \X_h \cdot \N)_{\partial T}  \\
    \ge& - \sum_{T \in \Th} h_T^{-1}\|r - \Cl r\|_{0,T} h_T\|\Div \X_h\|_{0,T}\\
    &-  \sum_{E \in \Eh}h_E \| \jump{ \X_h \cdot \N } \|_{0,E} h_E^{-1}\| r - \Cl r \|_{0,E} \\
    \ge&  -2 C_i \| r \|_1 \| \Div \X_h \|_{-1, h}.
  \end{align*}
  Thus, in total we have 
  \begin{align*}
    (\X_h,\nabla \Cl r) &= (\X_h,\nabla (\Cl r -  r)) + (\X_h,\nabla r) \\
    & \ge - 2C_i \| r \|_1 \| \Div \X_h \|_{-1, h} + C' \| r\|_1 \| \Div \X_h \|_{-1} \\
    & \ge - C_2  \| \Div \X_h \|_{-1, h} \| \Cl r \|_1 + C_1 \| \Div \X_h \|_{-1} \| \Cl r \|_1,
  \end{align*}
 where $C_2 = 2C_i/C_s$ and $C_1 = C'/C_s$, which proves \eqref{eq:toprove}. 

  Now suppose that \eqref{eq:disclbb_meshnorm} is valid with a constant
  $C_3 > 0$, then we have for a convex combination with $t > 0$ 
  \begin{align*}
    \sup_{v_h \in V_h} \frac{(\X_h, \nabla v_h)}{\|v_h\|_1} 
    & = t \sup_{v_h \in V_h} \frac{(\X_h, \nabla v_h)}{\|v_h\|_1} 
    + (1 - t) \sup_{v_h \in V_h} \frac{(\X_h, \nabla v_h)}{\|v_h\|_1} \\
    & \gtrsim t  (C_1 \| \Div \X_h \|_{-1} - C_2  \| \Div \X_h \|_{-1, h}) + (1-t) C_3 \| \Div \X_h \|_{-1, h} \\
    & \gtrsim \tilde{C} \| \Div \X_h \|_{-1},
  \end{align*}
  where we have chosen $t := \frac{1}{2} C_3(C_2 + C_3)^{-1}$ and thus
  $\tilde{C} = C_1C_3/(2 (C_2 + C_3))$. 
\end{proof}

\subsection{Some stable discretizations} \label{sec:stablepairs}
Theorem~\ref{thm:discretelbb} shows that it is sufficient to prove condition 
\eqref{eq:disclbb_meshnorm} for some finite element spaces $V_h$ and
$\Q_h$. In the following we discuss some stable
choices. 
Let $\P^l(K)$
denote the space of polynomials of order $l \ge 0$ on $K \in \Th$, and let
$\P^l(K, \rr^2)$ denote its vector-valued version. 
The same notation is used for polynomals on $E \in \Eh$.
\subsubsection*{The $P^{k}P^{k-2}$ family}   
We choose the spaces 
\begin{subequations} \label{eq:p2p0spaces}
  \begin{align}
    V_h &:= \{v_h \in V: v_h|_T \in \P^k(T)~\forall T \in \Th \},\\
    \Q_h &:= \{\Mu_h \in \Q: v_h|_T \in \P^{k-2}(T, \rr^2)~\forall T \in \Th \}.
  \end{align}    
\end{subequations}
\begin{lem}
\label{lem:p2p0}
  The discrete spaces defined by \eqref{eq:p2p0spaces} fulfill the
  discrete condition \eqref{eq:disclbb_meshnorm}.
\end{lem}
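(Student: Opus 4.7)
The plan is to construct an explicit test function $v_h \in V_h$ from local bubble functions so that $(\X_h, \nabla v_h)$ controls $\|\Div \X_h\|_{-1,h}^2$ from below while $\|v_h\|_1$ is controlled by $\|\Div \X_h\|_{-1,h}$ from above. Since the mesh dependent norm \eqref{eq:meshdepnorm} has two contributions, elementwise divergences and normal jumps, the plan is to handle each with its own bubble family and then combine them with a small convex parameter, in the spirit of the classical local bubble function construction.

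For the volumetric part, on each $T \in \Th$ let $b_T \in \P^3(T) \cap H^1_0(T)$ denote the standard element bubble (product of the three barycentric coordinates) and set $v_h^{(1)}|_T := -h_T^2\, b_T\, (\Div \X_h)|_T$. Since $(\Div \X_h)|_T \in \P^{k-3}(T)$ (the trivial space when $k = 2$, in which case $v_h^{(1)} = 0$), we have $v_h^{(1)} \in V_h$. Elementwise integration by parts produces no boundary contribution, and together with the standard norm equivalence $\int_T b_T q^2 \gtrsim \|q\|_{0,T}^2$ on $\P^{k-3}(T)$ and a bubble inverse estimate one obtains
$$(\X_h, \nabla v_h^{(1)}) \gtrsim \sum_{T \in \Th} h_T^2 \|\Div \X_h\|_{0,T}^2, \qquad \|v_h^{(1)}\|_1^2 \lesssim \sum_{T \in \Th} h_T^2 \|\Div \X_h\|_{0,T}^2.$$

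For the jump part, for each interior edge $E \in \Eh$ let $b_E \in \P^2$ denote the standard edge bubble, supported on the patch $\omega_E$, and let $\tilde{j}_E \in \P^{k-2}$ be a polynomial extension of $j_E := \jump{\X_h \cdot \N}|_E$ to the two adjacent elements satisfying the standard scaling $\|\tilde{j}_E\|_{0,T}^2 \lesssim h_E \|j_E\|_{0,E}^2$. Then $v_h^{(2)} := \sum_{E} h_E\, b_E\, \tilde{j}_E \in V_h$, the sum running over interior edges. Elementwise integration by parts and the edge norm equivalence $\int_E b_E p^2 \gtrsim \|p\|_{0,E}^2$ on $\P^{k-2}(E)$ yield
$$(\X_h, \nabla v_h^{(2)}) \geq -(\Div \X_h, v_h^{(2)}) + c \sum_{E \in \Eh} h_E \|j_E\|_{0,E}^2,$$
while scaling and an inverse estimate give $\|v_h^{(2)}\|_1^2 \lesssim \sum_E h_E \|j_E\|_{0,E}^2$ together with the local bound $\|v_h^{(2)}\|_{0,T}^2 \lesssim h_T^3 \sum_{E \subset \partial T} \|j_E\|_{0,E}^2$, the latter being crucial for the next step.

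To conclude, set $v_h := v_h^{(1)} + \alpha\, v_h^{(2)}$ with a sufficiently small universal $\alpha > 0$. The unwanted cross term $(\Div \X_h, v_h^{(2)})$ is treated by Cauchy--Schwarz using the local $L^2$-bound on $v_h^{(2)}$; after summing over elements it is dominated by a geometric mean of the two favorable quantities already bounded from below, and is therefore absorbed by Young's inequality once $\alpha$ is fixed small enough. This yields $(\X_h, \nabla v_h) \gtrsim \|\Div \X_h\|_{-1,h}^2$ and $\|v_h\|_1 \lesssim \|\Div \X_h\|_{-1,h}$, from which \eqref{eq:disclbb_meshnorm} follows. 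The main nuisance is the construction of the extension $\tilde{j}_E$ with the right $L^2$-scaling and the bookkeeping of the various $h$-factors, so that Young's inequality can be closed with a mesh-independent $\alpha$ and all constants remain uniform in $h$.
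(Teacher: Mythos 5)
Your proof is correct, but it follows a genuinely different route from the paper. The paper constructs a \emph{single} test function $v_h\in V_h$ by prescribing its full set of degrees of freedom at once: zero vertex values, edge moments against $\P^{k-2}(E)$ matched to $h_E\jump{\X_h\cdot\N}$, and interior moments against $\P^{k-3}(T)$ matched to $-h_T^2\Div\X_h$. Element-wise integration by parts then yields the \emph{exact} identity $(\nabla v_h,\X_h)=\|\Div\X_h\|_{-1,h}^2$ with no cross term to absorb, and the bound $\|\nabla v_h\|_0\lesssim\|\Div\X_h\|_{-1,h}$ follows from a scaling argument in which the full $L^2$-norms of $v_h$ on $T$ and $E$ are equivalent to the norms of their projections onto $\P^{k-3}(T)$ and $\P^{k-2}(E)$, precisely because $v_h$ vanishes at the vertices. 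You instead use the classical Verf\"urth bubble machinery: an element bubble for the divergence part, an edge bubble combined with a polynomial extension for the jump part, and a small parameter $\alpha$ with Young's inequality to absorb the cross term $(\Div\X_h,v_h^{(2)})$. Your approach is more modular and recycles standard a posteriori tools (the same ones the paper later uses in Theorem~\ref{thm:localeff}), at the price of the extension operator and the absorption bookkeeping; the paper's approach avoids all cross terms but requires exhibiting a unisolvent set of degrees of freedom for $\P^k$ and the associated norm equivalence. You also correctly handle the degenerate case $k=2$, where $\Div\X_h$ vanishes element-wise and only the edge contribution survives; your restriction of the jump sum to interior edges is consistent with $v_h\in H^1_0(\Omega)$ and with the paper's implicit convention. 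Both arguments give constants independent of $h$, so your proposal is a valid alternative proof of the lemma.
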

\begin{proof}
  Let $\X_h \in \Q_h$ be arbitrary. We choose $v_h \in V_h$ such that
  \begin{subequations}\label{eq::pkpkmoments}
  \begin{alignat}{2}
    v_h(x_i) &=0 &\quad &\forall \textrm{~vertices~} x_i, \label{eq::pkpkmomentsverts} \\ 
    \int_E v_h r \ds &=\int_E h_E \jump{\X_h \cdot \N} r \ds  &\quad & \forall r \in \P^{k-2}(E), \forall E \in \Eh, \\
    \int_T v_h l \dx &=-\int_T h_T^2 \Div \X_h l \dx  &\quad & \forall l \in \P^{k-3}(T), \forall T \in \Th.
  \end{alignat}
    \end{subequations}
  Element-wise integration by parts and using that $\jump{\X_h \cdot \N} \in \P^{k-2}(E)$ for all edges $E \in \Eh$, and $\Div \X_h \in \P^{k-3}(T)$ for all elements $T \in \Th$, we have 
  \begin{align*}
    (\nabla v_h, \X_h)  
    = \| \Div \X_h \|_{-1,h}^2.
  \end{align*}
  By a standard scaling argument we have on each element $T \in \Th$
  \begin{align} \label{eq::dicnormequi}
  \begin{aligned}
      \| \nabla v_h \|^2_{0,T} 
      &\sim 
      \sum\limits_{E \subset \partial T} \frac{1}{h_E} \| v_h \|^2_{0,E} + \frac{1}{h_T^2}\| v_h \|^2_{0,T} \\
      &\sim 
      \sum\limits_{E \subset \partial T} \frac{1}{h_E} \| \Pi^{k-2}_E v_h \|^2_{0,E} + \frac{1}{h_T^2}\| \Pi^{k-3}_T v_h \|^2_{0,T}, 
      \end{aligned}
  \end{align}
  where $\Pi^{k-2}_E$ and $\Pi^{k-3}_T$ are the edge-wise and element-wise $L^2$-projection onto polynomials of order $k-2$ and $k-3$, respectively. Note that the second equivalence follows due to $v_h$ vanishing at all vertices, see \eqref{eq::pkpkmomentsverts}. 
  Using this equivalence we have by the moments \eqref{eq::pkpkmoments} and the Cauchy--Schwarz inequality
  \begin{align*}
    \| \nabla v_h \|^2_{0,T}  
     \lesssim & 
    \sum\limits_{E \subset \partial T} \frac{1}{h_E} \int_E (\Pi^{k-2}_E v_h)^2 \ds + \frac{1}{h_T^2} \int_T (\Pi^{k-3}_T v_h)^2 \dx \\
     =& \sum\limits_{E \subset \partial T} \int_E \Pi^{k-2}_E v_h \jump{\X_h \cdot \N} \ds - \int_T \Pi^{k-3}_T v_h \Div \X_h \dx \\
     \lesssim&
    \Big( \sum\limits_{E \subset \partial T} \frac{1}{h_E} \| \Pi^{k-2}_E v_h \|^2_{0,E} + \frac{1}{h_T^2}\| \Pi^{k-3}_T v_h \|^2_{0,T} \Big)^{1/2}\\
    &\Big( \sum\limits_{E \subset \partial T} {h_E} \| \jump{\X_h \cdot \N } \|^2_{0,E} + {h_T^2}\| \Div \X_h \|^2_{0,T} \Big)^{1/2}.
  \end{align*}
  Summing over all elements and using the norm equivalence \eqref{eq::dicnormequi} we conclude the proof.
\end{proof}

\subsubsection*{The MINI family}   
We choose the spaces 
\begin{subequations} \label{eq:MINIspaces}
  \begin{align}
    V_h &:= \{v_h \in V: v_h|_T \in \P^{k+2}(T)~\forall T \in \Th, v_h|_E \in \P^k(E)~\forall E \in \Eh \},\\
    \Q_h &:= \{\Mu_h \in \Q: v_h|_T \in \P^k(T, \rr^2)~\forall T \in \Th \} \cap H^1(\Omega, \rr^2).
  \end{align}    
\end{subequations}
Note that since now $\Q_h$ is a subset of $H^1(\Omega,\rr^2)$, the normal jumps in $\| \Div(\cdot) \|_{-1, h}$ vanish.  
\begin{lem}
  The discrete spaces defined by \eqref{eq:MINIspaces} fulfill the
  discrete condition \eqref{eq:disclbb_meshnorm}.
\end{lem}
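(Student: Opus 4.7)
The plan is to exploit the simplification coming from $\Q_h \subset H^1(\Omega, \rr^2)$: the normal jump terms in $\| \Div (\cdot) \|_{-1,h}$ vanish, so the mesh-dependent norm reduces to
\[
\| \Div \X_h \|_{-1,h}^2 = \sum_{T \in \Th} h_T^2 \| \Div \X_h \|_{0,T}^2.
\]
Once this is noted, the standard MINI-style bubble trick should work element by element. The structure of $V_h$ is precisely that of a $\P^k$ space enriched with interior bubbles, which is exactly what is needed to control the elementwise divergence.

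Concretely, for an arbitrary $\X_h \in \Q_h$ I would take the cubic bubble $b_T = \lambda_1 \lambda_2 \lambda_3 \in \P^3(T)$ and define
\[
v_h|_T := -h_T^2 \, b_T \, (\Div \X_h)|_T.
\]
Since $\Div \X_h|_T \in \P^{k-1}(T)$, this lies in $\P^{k+2}(T)$, and its trace on every edge is zero (hence trivially in $\P^k(E)$), so $v_h \in V_h$. Elementwise integration by parts kills the boundary terms because $v_h$ vanishes on $\partial T$, leaving
\[
(\nabla v_h, \X_h)_T = -(v_h, \Div \X_h)_T = h_T^2 \int_T b_T \, (\Div \X_h)^2 \dx \sim h_T^2 \| \Div \X_h \|_{0,T}^2,
\]
where the equivalence uses that $q \mapsto (\int_T b_T q^2 \dx)^{1/2}$ and $\|\cdot\|_{0,T}$ are equivalent norms on the finite-dimensional space $\P^{k-1}(T)$ together with a standard scaling argument. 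Summing over $T \in \Th$ gives $(\nabla v_h, \X_h) \gtrsim \| \Div \X_h \|_{-1,h}^2$.

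For the denominator I would use the inverse inequality $\|\nabla v_h\|_{0,T} \lesssim h_T^{-1} \|v_h\|_{0,T}$ together with the definition of $v_h$ to obtain $\|\nabla v_h\|_{0,T} \lesssim h_T \|\Div \X_h\|_{0,T}$; summing over elements and applying Friedrichs yields $\|v_h\|_1 \lesssim \|\Div \X_h\|_{-1,h}$, so the ratio $(\nabla v_h, \X_h)/\|v_h\|_1$ is bounded below by $\|\Div \X_h\|_{-1,h}$ as required. The main, and really only, obstacle is the degree-count check ensuring that the bubble-times-divergence function actually sits in $V_h$; this is why the cubic bubble $b_T$ is the natural and minimal choice, matching the enrichment $\P^{k+2}$ in $V_h$ for $k \geq 1$. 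The case $k=0$ is trivial since then $\X_h$ is globally constant and $\Div \X_h = 0$, making the discrete norm zero.
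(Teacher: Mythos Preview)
Your proof is correct and follows essentially the same approach as the paper: exploit $\Q_h \subset H^1(\Omega,\rr^2)$ to drop the jump terms, then build a bubble-supported test function in $V_h$ elementwise to hit the volume residual. The only cosmetic difference is that the paper defines $v_h$ implicitly via the moment conditions $\int_T v_h\, l \dx = -\int_T h_T^2 \Div \X_h\, l \dx$ for $l \in \P^{k-1}(T)$, obtaining an exact equality $(\nabla v_h,\X_h) = \|\Div \X_h\|_{-1,h}^2$, whereas you write down the explicit candidate $v_h|_T = -h_T^2 b_T \Div \X_h$ and get the equivalent two-sided bound $(\nabla v_h,\X_h)_T \sim h_T^2\|\Div \X_h\|_{0,T}^2$; either route closes the argument.
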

\begin{proof}
  Let $\X_h \in \Q_h$ be arbitrary. 
  We choose $v_h \in V_h$ such that it vanishes at all vertices and all edges, i.e.~$v_h \in H^1_0(T)$ for all elements $T \in \Th$. In addition $v_h$  fulfills
  \begin{alignat*}{2}
    \int_T v_h l \dx &=-\int_T h_T^2 \Div \X_h l \dx  &\quad & \forall l \in \P^{k-1}(T), \forall T \in \Th.
  \end{alignat*}
  Using integration by parts and the fact that $\Div \X_h \in \P^{k-1}(T)$ for all elements $T \in \Th$, we have again
  \begin{align*}
    (\nabla v_h, \X_h) 
    = \| \Div \X_h \|_{-1, h}^2.
  \end{align*}
  With similar scaling arguments as in the proof of Lemma~\ref{lem:p2p0} we also
  have $\| \nabla v_h \|_0 \lesssim \| \Div \X_h \|_{-1,
  h}$, which concludes the proof.
\end{proof}


\rem{
\textit{The Crouzeix--Raviart method.}
The last method we want to mention is using a nonconforming
approximation of the velocity. We define the spaces
  \begin{alignat*}{2}
    V_h &:= \{v_h \in L^2(\Omega, \rr): &&v_h|_T \in \P^1(T)~\forall T \in \Th, \\
   &\qquad &&v_h~\textrm{is continuous and vanishes at midpoints} \\ 
   &\qquad &&\textrm{of interior and boundary edges, respectively}\},\\
    \Q_h &:= \{\Mu_h \in L^2(\Omega, \rr^2)&&: v_h|_T \in \P^0(T, \rr^2)~\forall T \in \Th \}.
  \end{alignat*}    
Since the degrees-of-freedom of the velocity are again associated to edges,
the stability analysis is similar as for the $P^kP^{k-1}$ method with $k=2$. Further note
that since $\nabla v_h \in \Q_h$ locally on each element, one can
reformulate the mixed method \eqref{eq:discproblem} as a primal method
(without the Lagrange multiplier $\Lam_h$) which is similar to the
nonconforming approximations from \cite{MR3475655} and (as explained
in \cite{MR3475655}) the method in \cite{MR449119}. Due to the
extensive analysis therein, we do not consider this method in the
present work, but want to mention that our techniques can be applied
accordingly.}

\section{A priori error analysis}\label{sec::apriori}

In this section we present an a priori error estimate and prove a
linear convergence for $H^2$-regular velocity solutions. This stands in
contrast to the suboptimal result $\mathcal{O}(h^{1/2})$ (for a
linear approximation) from \cite{MR737005,MR0520279} and is in
accordance to the linear convergence results from
\cite{MR3475655,MR449119}.
Although the analysis could be extended to provide a better rate for
smooth solutions, a higher regularity can not be expected for Bingham-type flows 
as discussed in the introduction.


\begin{thm} \label{thm:bestapprox}
  Let $(u_h, \Lam_h) \in V_h \times \Lamspace_h$ be the solution of
  \eqref{eq:discproblem}, then for any $(w_h, \X_h) \in V_h \times
  \Q_h$ it holds
  \begin{align*}
  \| u - u_h \|_1 &+ \| \Div (\Lam - \Lam_h) \|_{-1} \\
  &\lesssim \| u - w_h \|_1 + \| \Div(\Lam - \X_h) \|_{-1} + \sqrt{(\nabla u, \Lam - \X_h)}.
\end{align*}
\end{thm}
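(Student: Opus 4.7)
The plan is to use the triangle inequality to reduce the target quantity to a discrete-to-discrete error, then invoke the discrete stability of Theorem~\ref{thm:discretestability} to produce a convenient test function. Set $E_u := u_h - w_h \in V_h$ and $E_\Lam := \Lam_h - \X_h \in \Q_h$, and write $A := \| E_u \|_1 + \| \Div E_\Lam \|_{-1}$ and $B := \| u - w_h \|_1 + \| \Div(\Lam - \X_h) \|_{-1}$. A single triangle inequality bounds the left-hand side of the theorem by $A + B$, so it suffices to show $A \lesssim B + \sqrt{(\nabla u, \Lam - \X_h)}$. Theorem~\ref{thm:discretestability} applied to $(E_u, E_\Lam)$ furnishes $r_h \in V_h$ with $\| r_h \|_1 \lesssim A$ and $A^2 \lesssim \Bf(E_u, E_\Lam; r_h, -E_\Lam)$.

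Next I would expand the bilinear form by linearity and insert $\pm u$ in the velocity slots and $\pm \Lam$ in the multiplier slots. The combination $(\nabla(u_h - u), \nabla r_h) + (\nabla r_h, \Lam_h - \Lam)$ vanishes by Galerkin orthogonality on the equality \eqref{eq:first}, which holds for both the continuous and the discrete solution when tested against $r_h \in V_h \subset V$. After this cancellation I expect
\begin{equation*}
  \Bf(E_u, E_\Lam; r_h, -E_\Lam) = (\nabla(u - w_h), \nabla r_h) + (\nabla r_h, \Lam - \X_h) - (\nabla E_u, E_\Lam).
\end{equation*}

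The core of the argument, and the main obstacle, lies in the last term, where both variational inequalities have to be exploited in a coordinated way. I would rewrite it as
\begin{equation*}
  -(\nabla E_u, E_\Lam) = (\nabla u_h, \X_h - \Lam_h) + (\nabla u, \Lam_h - \X_h) - (\nabla(u - w_h), E_\Lam).
\end{equation*}
Assuming $\X_h \in \Lamspace_h$ (needed anyway to keep the square-root argument nonnegative), the discrete inequality \eqref{eq:discproblem} with $\Mu_h = \X_h$ yields $(\nabla u_h, \X_h - \Lam_h) \leq 0$. Splitting $\Lam_h - \X_h = (\Lam_h - \Lam) + (\Lam - \X_h)$ and applying the continuous inequality \eqref{eq:second} with $\Mu = \Lam_h \in \Lamspace$ kills $(\nabla u, \Lam_h - \Lam)$ and leaves exactly the critical term $(\nabla u, \Lam - \X_h)$.

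To wrap up, I would bound the remaining inner products using the duality $(\nabla v, \X) \leq \| v \|_1 \| \Div \X \|_{-1}$ from \eqref{eq:Bcont}, collecting $\|r_h\|_1 B$ and $\|u - w_h\|_1 \|\Div E_\Lam\|_{-1}$, both of the form $A \cdot B$. This produces $A^2 \lesssim A B + (\nabla u, \Lam - \X_h)$, and Young's inequality absorbs one power of $A$ into the left-hand side, giving $A \lesssim B + \sqrt{(\nabla u, \Lam - \X_h)}$. The delicate part is the bookkeeping in the splitting of $-(\nabla E_u, E_\Lam)$ and the recognition that the two variational inequalities must be applied with matching test multipliers ($\X_h$ on the discrete side, $\Lam_h$ on the continuous side) so that the only surviving indefinite contribution is precisely $(\nabla u, \Lam - \X_h)$; the remainder is standard mixed-method error analysis.
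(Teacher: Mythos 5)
Your argument is correct and follows essentially the same route as the paper's proof: triangle inequality, discrete stability from Theorem~\ref{thm:discretestability}, the discrete inequality \eqref{eq:discproblem} tested with $\Mu_h = \X_h$, the continuous equation \eqref{eq:first} and inequality \eqref{eq:second} with $\Mu = \Lam_h$, then continuity of $\Bf$ and Young's inequality; your separation into Galerkin orthogonality plus the splitting of $-(\nabla E_u, E_\Lam)$ is only a bookkeeping variant of the paper's manipulation of the combined bilinear form. You are also right to flag that $\X_h \in \Lamspace_h$ is implicitly required (both for the discrete variational inequality and for the nonnegativity under the square root), an assumption the paper's proof uses tacitly even though the statement allows $\X_h \in \Q_h$.
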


\begin{proof}
  Let $(w_h, \X_h) \in V_h \times \Q_h$ be arbitrary. By the
  discrete stability, see Theorem~\ref{thm:discretestability}, we find
  $v_h$ such that
  \begin{align*}
    \Big( \| u_h - w_h \|_1 + \| \Div(\Lam_h - \X_h) \|_{-1} \Big)^2 &\lesssim \Bf(u_h - w_h, \Lam_h - \X_h; v_h, \X_h - \Lam_h) \\
    &\leq (f, v_h) - \Bf(w_h, \X_h; v_h, \X_h - \Lam_h).
  \end{align*}
  Using the continuous problem \eqref{eq:weak1} we get
  \begin{align*}
    &(f, v_h) - \Bf(w_h, \X_h; v_h, \X_h - \Lam_h) \\
    &=(\nabla u, \Lam_h - \X_h) + \Bf(u - w_h, \Lam - \X_h; v_h, \X_h - \Lam_h) \\
    &\leq (\nabla u, \Lam - \X_h)  + \Bf(u - w_h, \Lam - \X_h; v_h, \X_h - \Lam_h),
  \end{align*}
  which concludes the proof with the continuity of $\Bf$, see \eqref{eq:Bcont}, and 
  \begin{align*}
    &\| u - u_h \|_1 + \| \Div(\Lam - \Lam_h) \|_{-1} \\
    &\le \| u - w_h \|_1 + \| \Div(\Lam - \X_h) \|_{-1} + \| u_h - w_h \|_1 + \| \Div(\Lam_h - \X_h) \|_{-1}.
  \end{align*}
\end{proof}

The following lemma shows that we can expect a linear convergence
whenever the solution is at least $H^2$-regular. 
Note that it is essential to bound the error just in terms of  
$\Div \Lam$, since our analysis does not provide any control 
for the divergence-free part of $\Lam$. According to \cite{MR737005} we further 
have for a convex domain and a smooth soulution the stability estimate 
\begin{align*} 
    |u|_2 + \| \Div \Lam \|_0 \lesssim \|f\|_{0}.
\end{align*}

\begin{lem} \label{lem::apriori}
  Let $\Omega$ be simply connected and convex.
  Choose $V_h$ and $\Q_h$ as in
  Section~\eqref{sec:stablepairs}, and let $(u_h, \Lam_h) \in V_h \times
  \Lamspace_h$ be the corresponding discrete solution. Further let $u \in
  V \cap H^2(\Omega)$  and $\Lam \in \Lamspace \cap H(\Div, \Omega)$.
  Then there holds
  \begin{align*}
    \| u - u_h \|_1 + \| \Div(\Lam - \Lam_h) \|_{-1} \lesssim h ( |u|_2 + \| \Div \Lam \|_0 )
    \lesssim h \| f \|_0,
  \end{align*}
  where $h = \max\limits_{T \in \Th} \operatorname{diam}({T})$.
\end{lem}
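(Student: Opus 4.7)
The plan is to apply Theorem~\ref{thm:bestapprox} with suitable choices of $w_h \in V_h$ and $\X_h \in \Q_h$, and to show that each of the three terms on the right--hand side is bounded by a constant multiple of $h(|u|_2 + \|\Div \Lam\|_0)$. The second inequality of the lemma then follows from the cited stability bound $|u|_2 + \|\Div \Lam\|_0 \lesssim \|f\|_0$.

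For $w_h$ I would use the Scott--Zhang (or Lagrange) interpolant of $u$ into $V_h$; since each family in Section~\ref{sec:stablepairs} contains at least $\P^2$ and $u \in H^2 \cap V$, the standard estimate $\|u - w_h\|_1 \lesssim h|u|_2$ holds. For $\X_h$ I would construct a commuting--diagram projection of $\Lam$ into $\Q_h$, the essential property being that $\Div \X_h$ matches an appropriate $L^2$--projection $\PS \Div \Lam$ of $\Div \Lam$ onto a piecewise polynomial space. For the MINI family such a projection is built by a Scott--Zhang--type interpolation of $\Lam$ corrected by interior bubble degrees of freedom to preserve element divergence moments; for the $\P^k\P^{k-2}$ family the moment conditions used in Lemma~\ref{lem:p2p0} naturally yield such a projection. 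The bound $\|\Div(\Lam - \X_h)\|_{-1} \lesssim h\|\Div \Lam\|_0$ then follows from the duality identity $(\Div \Lam - \PS \Div \Lam, v) = (\Div \Lam - \PS \Div \Lam, v - \PS v)$ together with the Poincar\'e estimate $\|v - \PS v\|_0 \lesssim h\|v\|_1$ for $v \in H^1_0$.

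The principal obstacle is the third term $\sqrt{(\nabla u, \Lam - \X_h)}$: a routine Cauchy--Schwarz yields only an $h^{1/2}$ rate, so recovering the linear rate requires exploiting the full $H^2$--regularity of $u$. My plan is to integrate by parts, using $u|_{\partial\Omega} = 0$ together with the conformity (or moment properties) of $\X_h$---edge--jump contributions in the discontinuous $\Q_h$ case being controlled by trace inequalities and the moment construction of $\X_h$---to reduce $(\nabla u, \Lam - \X_h)$ to a pairing of the form $-(u - \Pi u, \Div \Lam - \PS \Div \Lam)$, where $\Pi$ is an $L^2$--projection onto a piecewise polynomial space of sufficient degree. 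Combining the approximation estimate $\|u - \Pi u\|_0 \lesssim h^2 |u|_2$ with the boundedness $\|\Div \Lam - \PS \Div \Lam\|_0 \lesssim \|\Div \Lam\|_0$ then yields $(\nabla u, \Lam - \X_h) \lesssim h^2|u|_2\|\Div \Lam\|_0 \lesssim h^2 (|u|_2 + \|\Div \Lam\|_0)^2$, and the required linear rate follows by taking the square root and combining with the first two terms.
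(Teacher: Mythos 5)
Your overall skeleton (invoke Theorem~\ref{thm:bestapprox}, then bound the three right-hand-side terms by $h(|u|_2+\|\Div\Lam\|_0)$) matches the paper, and the treatment of $\|u-w_h\|_1$ is fine. But the construction of $\X_h$ and the handling of the consistency term both have genuine gaps. First, the ``commuting-diagram projection'' you posit does not exist for the spaces at hand: for the $P^kP^{k-2}$ family $\Q_h$ is fully discontinuous, so the distributional divergence of $\X_h$ is an edge-supported measure (normal jumps) plus an element part in $\P^{k-3}(T)$ --- for $P^2P^0$ the element part is identically zero --- and it cannot equal an $L^2$-projection of $\Div\Lam$; for the MINI family, interior bubble corrections $b_T\bs{c}$ satisfy $\int_T \Div(b_T\bs{c})\dx=0$, so they cannot adjust even the element mean of the divergence. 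Second, and more fundamentally, your reduction of $(\nabla u,\Lam-\X_h)$ to $-(u-\Pi u,\Div\Lam-\PS\Div\Lam)$ requires $\operatorname{range}(\Pi)\subseteq\operatorname{range}(\PS)$ for the insertion of $\Pi u$ to be free, and $\PS$ is constrained to the (very low) polynomial degree of $\Div\X_h$. To get $\|u-\Pi u\|_0\lesssim h^2|u|_2$ you need $\Pi$ to reproduce piecewise linears, which fails for all the element pairs actually used (MINI, $P^2P^0$, $P^3P^1$); with the available piecewise-constant projection you only obtain $(\nabla u,\Lam-\X_h)\lesssim h\,|u|_1\|\Div\Lam\|_0$, and after the square root this is the suboptimal $\mathcal{O}(h^{1/2})$ rate the lemma is designed to improve upon.

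The paper avoids both problems with a Helmholtz decomposition, and this is where the hypotheses you never use (convexity and simple connectedness) enter. One solves $(\nabla\theta,\nabla v)=(\Div\Lam,v)$ so that $|\theta|_2\lesssim\|\Div\Lam\|_0$ by convexity, writes $\Lam=\nabla\theta+\Curl\phi$, and observes that $\Curl\phi$ is invisible in every pairing against $\nabla v$ with $v\in V$. One then takes $\X_h=\PS\nabla\theta$ (resp.\ a Cl\'ement interpolant of $\nabla\theta$ for MINI), i.e.\ one approximates only the \emph{gradient part} of $\Lam$, which lies in $H^1(\Omega,\rr^2)$. The consistency term is then handled at the level of gradients: $(\nabla u,\Lam-\X_h)=((\id-\PS)\nabla u,(\id-\PS)\nabla\theta)\lesssim h|u|_2\cdot h\|\Div\Lam\|_0$, a product of two first-order projection errors of $H^1$ vector fields. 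This double-orthogonality at the gradient level, rather than your integration by parts down to $u$ versus $\Div\Lam$ (where $\Div\Lam$ carries no extra regularity), is the missing idea needed to recover the linear rate.
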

\begin{proof}
  We solve the Dirichlet problem: find $\theta \in H^1_0(\Omega)$ such
  that 
  \begin{align*}
    (\nabla \theta, \nabla v) = (\Div \Lam, v) \quad \forall v \in H_0^1(\Omega),
  \end{align*}
  for which we have due to the assumptions on the domain that $|
 \theta |_2 \lesssim \| \Div \Lam \|_0$. Further, since $\Lam -
  \nabla \theta$ is divergence free (by construction) Theorem 3.1 in
  \cite{girault2012finite} shows that there exists a $\phi \in
  H^1(\Omega)$ such that $\Lam = \nabla \theta + \Curl \phi$. 

  Let $w_h := \IL u$, where $\IL$ is the Lagrange interpolation
  operator onto $V_h$, then by the approximation properties of
  $\IL$ we have
  \begin{align*}
    \| u - w_h \|_1 \lesssim h | u |_2.
  \end{align*} 
  Using integration by parts and that $\Curl \nabla v = 0$ for all $v
  \in V$ we have 
  \begin{align*}
    \| \Div(\Lam - \X_h) \|_{-1} &= \sup_{v \in V} \frac{(\Lam - \X_h, \nabla v)}{\| v \|_1} 
    = \sup_{v \in V} \frac{(\Lam - \Curl \phi - \X_h, \nabla v)}{\| v \|_1}.
  \end{align*}
  For the $P^kP^{k-2}$ familiy we choose $\X_h = \PS \nabla
  \theta$, then 
  \begin{align*}
    (\Lam - \Curl \phi - \X_h, \nabla v) &= ( (\id - \PS) \nabla \theta , \nabla v) 
\lesssim h | \theta |_2 \| \nabla v \|_0 
\lesssim h \| \Div \Lam \|_0 \| \nabla v \|_0.
  \end{align*}
  It remains to bound the last term. For this note that since $\id -
 \PS$ is orthogonal on constants we have with similar steps as above
  \begin{align*}
    (\nabla u, \Lam - \X_h) = ( \nabla u, (\id - \PS) \nabla \theta) = ((\id - \PS) \nabla u, (\id - \PS) \nabla \theta) \lesssim h | u |_ 2 h \| \Div \Lam \|_0,
  \end{align*}
  from which we conclude the proof.

  For the MINI family, we use the same
  steps as above, but choose $\X_h = \Cl \nabla \theta$. 
  Using Theorem 2.6 from \cite{MR2240051} we get again the bound 
  \begin{align*}
    (\nabla u, \Lam - \X_h) \lesssim h | u |_ 2 h \| \Div \Lam \|_0,
  \end{align*}
  and we conclude the proof with Theorem~\ref{thm:bestapprox}. 
\end{proof}

\section{A posteriori error analysis}\label{sec::aposteriori}
Since a high regularity of the solution cannot be expected in general,
this section is dedicated to a posteriori error control, enabling the use of
adaptive mesh refinement. We define the local error estimators -- including the dependency on $g$ and $\mu$ to allow for a direct implementation -- as
\begin{align*}
  \eta^2_T &:= h^2_T \| \mu \Delta u_h + g\,\Div \Lam_h + f \|^2_{0,T} , \\
  \eta^2_E &:= h_E \| \jump{ (\mu \nabla u_h + g\Lam_h) \cdot \N} \|^2_{0,E},\\ 
  \eta^2_{\con} &:=  g\int_T (| \nabla u_h| - \Lam_h \cdot \nabla u_h) \dx,
\end{align*}
and the global estimator 
\begin{align*}
  \eta := \sqrt{ \sum_{T \in \Th} \eta_T^2 + \sum_{E \in \Eh} \eta_E^2 + \sum_{T \in \Th} \eta_{\con}^2}.
\end{align*}
The element and edge estimators $\eta_T$ and $\eta_E$, respectively, are standard 
residual estimators as known from the literature. The additional term $\eta_{\con}$ 
can be interpreted as a consistency estimator of equation \eqref{eq:exactprob_three}.
Further we want to emphasize that all estimators only depend on the distributional divergence of $\Lam_h$ for which we have discrete stability, see Theorem~\ref{thm:discretestability}. While this is clear for $\eta_T$ and $\eta_E$, through integration by parts this is also evident for $\eta_{\con}$.
\begin{thm}
There holds the a posteriori error estimate 
\begin{align*}
  \| u - u_h \|_1 + \| \Div(\Lam - \Lam_h) \|_{-1} \lesssim \eta. 
\end{align*}
\end{thm}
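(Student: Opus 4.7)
The plan is to follow the same template that underlies the a priori analysis: apply the continuous inf-sup result (Theorem~\ref{th:contstab}) to the error pair, then split the resulting expression for $\Bf$ into a ``bilinear residual'' part that is handled with Galerkin orthogonality plus element-wise integration by parts, and a ``complementarity'' part that is controlled via the pointwise constraint $|\Lam|\le 1$ and the variational inequality satisfied by $u$.

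First I would invoke Theorem~\ref{th:contstab} applied to $(w,\X)=(u-u_h,\Lam-\Lam_h)\in V\times\Q$ (the proof of that theorem uses $\X\in\Q$ only) to get an $r\in V$ satisfying $\|r\|_1\lesssim\|u-u_h\|_1+\|\Div(\Lam-\Lam_h)\|_{-1}$ and
\[
(\|u-u_h\|_1+\|\Div(\Lam-\Lam_h)\|_{-1})^2 \lesssim \Bf(u-u_h,\Lam-\Lam_h;\,r,\,-(\Lam-\Lam_h)).
\]
Expanding the bilinear form splits the right-hand side into the residual part $R:=(\nabla(u-u_h),\nabla r)+(\nabla r,\Lam-\Lam_h)$ and the complementarity part $C:=-(\nabla(u-u_h),\Lam-\Lam_h)$.

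For $R$, the continuous equation gives $(\nabla u,\nabla v)+(\nabla v,\Lam)=(f,v)$ for all $v\in V$, while the discrete equation gives the analogous identity for $v_h=\Cl r\in V_h$ (using the Cl\'ement interpolant). Subtracting produces Galerkin orthogonality, so $R$ equals the same expression evaluated at $r-\Cl r$. Element-wise integration by parts then rewrites $R$ as
\[
R = \sum_{T\in\Th}(f+\Delta u_h+\Div\Lam_h,\,r-\Cl r)_T - \sum_{E\in\Eh}(\jump{(\nabla u_h+\Lam_h)\cdot\N},\,r-\Cl r)_E,
\]
and Cauchy--Schwarz together with the Cl\'ement estimates \eqref{eq:clement} yields $R\lesssim \|r\|_1\,\bigl(\sum_T\eta_T^2+\sum_E\eta_E^2\bigr)^{1/2}$.

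For $C$, I would exploit the two variational inequalities. Writing $C=-(\nabla u,\Lam-\Lam_h)+(\nabla u_h,\Lam-\Lam_h)$, the continuous inequality \eqref{eq:second} with $\Mu=\Lam_h\in\Lamspace$ gives $(\nabla u,\Lam_h-\Lam)\le 0$, i.e.\ the first summand is nonpositive. For the second summand, the pointwise bound $|\Lam|\le 1$ implies $\Lam\cdot\nabla u_h\le |\nabla u_h|$ a.e., so
\[
(\nabla u_h,\Lam-\Lam_h) \le \int_{\Omega}(|\nabla u_h|-\Lam_h\cdot\nabla u_h)\dx = \sum_{T\in\Th}\eta_{\con}^2.
\]
Combining the two bounds, $C\le\sum_T\eta_{\con}^2$. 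Feeding both estimates back and applying Young's inequality together with $\|r\|_1\lesssim\|u-u_h\|_1+\|\Div(\Lam-\Lam_h)\|_{-1}$ lets me absorb the linear factor on the left, yielding the claim after taking a square root.

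The main obstacle I anticipate is the complementarity term: unlike in the a priori proof, one cannot simply use the equation for $u$ since \eqref{eq:exactprob_three} holds only in the sign sense encoded by the variational inequality, and the discrete analogue is not directly available for $\Lam\in\Lamspace\setminus\Lamspace_h$. The key observation that makes everything work is that the algebraic identity $C=-(\nabla u,\Lam-\Lam_h)+(\nabla u_h,\Lam-\Lam_h)$ uses the continuous inequality on the ``u'' side and the constraint $|\Lam|\le 1$ on the ``$u_h$'' side, neatly producing the consistency estimator $\eta_{\con}$.
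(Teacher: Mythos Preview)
Your proposal is correct and follows essentially the same approach as the paper's own proof: invoke the continuous stability of Theorem~\ref{th:contstab} on the error pair, split into a residual part treated by Galerkin orthogonality with the Cl\'ement interpolant and element-wise integration by parts, and a complementarity part handled via $(\nabla u,\Lam_h-\Lam)\le 0$ together with $|\Lam|\le 1$ to produce $\eta_{\con}$. The only detail the paper makes more explicit than you do is the observation that $\nabla u + \Lam \in H(\Div,\Omega)$ (from $-\Div(\nabla u + \Lam)=f\in L^2$), which is what justifies the integration by parts leaving only the discrete jumps $\jump{(\nabla u_h+\Lam_h)\cdot\N}$ on interior edges.
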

\begin{proof}
  Using the continuous stability we find $v \in V$ such that 
  \begin{align*}
    &(\| u - u_h \|_1 + \| \Div(\Lam - \Lam_h) \|_{-1})^2 \\
    &\lesssim \Bf(u - u_h,\Lam - \Lam_h; v,\Lam_h - \Lam ) \\
    &= (\nabla (u - u_h), \nabla v) + (\nabla v, \Lam - \Lam_h) + (\nabla(u - u_h), 
    \Lam_h - \Lam),
  \end{align*}
  and $\| v \|_1 \lesssim \| u - u_h \|_1 + \| \Div(\Lam - \Lam_h)
  \|_{-1}$. We continue with the first two terms. Using the Cl\'ement
  operator we have 
  \begin{align*}
    (\nabla u_h, \nabla \Cl v) + (\nabla \Cl v, \Lam_h) = (f, \Cl v) = (\nabla u, \nabla \Cl v) + (\nabla \Cl v, \Lam),
  \end{align*}
  and thus 
  \begin{align*}
    &(\nabla (u - u_h), \nabla v) + (\nabla v, \Lam - \Lam_h) \\
    &= (\nabla (u - u_h), \nabla (v - \Cl v) ) + (\nabla (v - \Cl v),  \Lam - \Lam_h) \\
    &= \sum_{T \in \Th} (\nabla (u - u_h), \nabla (v - \Cl v) )_T + (\nabla (v - \Cl v),  \Lam - \Lam_h)_T.
  \end{align*}
  Since $\Div(-\nabla u - \Lam) = f$,
  see \eqref{eq:exactprob_one}, and $f \in L^2(\Omega)$, we have that $\nabla
  u + \Lam \in H(\Div, \Omega)$, i.e.~it is normal continuous. By that
  we have with integration by parts on each element
  \begin{align*}
    &(\nabla (u - u_h), \nabla v) + (\nabla v, \Lam - \Lam_h) \\
    &= \sum_{T \in \Th} (f + \Delta u_h + \Div \Lam_h, v - \Cl v)_T + \sum_{E \in \Eh} (\jump{(\nabla u_h + \Lam_h) \cdot \N}, v - \Cl v)_E.
  \end{align*} 
  Using the properties of $\Cl$, 
 cf.~\eqref{eq:clement}, we finally arrive at
  \begin{align*}
    (\nabla (u - u_h), \nabla v) &+ (\nabla v, \Lam - \Lam_h) \lesssim \Big( \sum_{T \in \Th} \eta_T^2 + \sum_{E \in \Eh} \eta_E^2\Big)^{1/2} \| v \|_1.
  \end{align*}

  It remains to bound the other term. For this note that
  \eqref{eq:second} gives $(\nabla u, \Lam_h) \le (\nabla u, \Lam)$,
  and thus as $| \Lam| \le 1$,
  \begin{align*}
    (\nabla(u - u_h), \Lam_h - \Lam) \le  (\nabla u_h, \Lam - \Lam_h) 
    &\le \sum_{T \in \Th} \int_T (| \nabla u_h ||\Lam| - \nabla u_h \cdot \Lam_h) \dx \\
    &\le \sum_{T \in \Th} \int_T (| \nabla u_h | - \nabla u_h \cdot \Lam_h) \dx,
  \end{align*}
  which concludes the proof.
\end{proof}

Theorem~\ref{thm:localeff} and Lemma~\ref{lem:globaleff} below 
provide local and global efficiency estimates, respectively, for the residual based estimators $\eta_T$ and $\eta_E$.
The proofs follow with similar steps as in
\cite{MR3723328}, i.e.~we will provide all details of the local
efficiency but refer to \cite{MR3723328} for the proof of
Lemma~\ref{lem:globaleff}. Further note that similarly as in \cite{MR3723328} 
it is not possible to provide an upper bound for the consistency error $\eta_{\con}$.

For the efficiency estimates we need some additional notation. 
Let $\omega \subset \Omega$ be arbitrary then we define for all 
$\Mu \in \Lamspace$ the local dual norm by 
\begin{align*}
  \| \Div \Mu\|_{-1,\omega} := \sup\limits_{v \in H^1_0(\omega)} \frac{\langle v, \Div \Mu \rangle_{-1,\omega}}{\| v \|_{1,\omega}} = \sup\limits_{v \in H^1_0(\omega)} \frac{(\nabla v, \Mu)_{\omega}}{\| v \|_{1,\omega}}.
\end{align*}
The subset $\omega$ will be either an element $T \in \Th$ or $\omega_E$, where $\omega_E$ denotes the edge-patch for a given edge $E \in \Eh$. 
Finally, let $f_h := \Pi^q f$ be the element-wise $L^2$ projection onto $\P^q(K)$ where $q$ is
the polynomial order of the space $\Q_h$, and let
\begin{align*}
    \osc_T(f) &:= h_T \| f - f_h \|_{0,T} \quad \textrm{and} \quad
    \osc(f) := \Big( \sum_{T \in \Th} h^2_T \| f - f_h \|^2_{0,T} \Big)^{1/2}.
\end{align*} 

\begin{thm} \label{thm:localeff}
  Let $v_h \in V_h$ and $\Mu_h \in \Lamspace_h$ be arbitrary. There
  holds the local efficiency
  \begin{align*}
    h_T \| \Delta v_h + \Div \Mu_h + f \|_{0,T}  &\lesssim \| u - v_h \|_{1,T} + \| \Div(\Lam - \Mu_h)\|_{-1,T} + \osc_T(f), \\
    h_E^{1/2} \| \jump{ (\nabla v_h + \Mu_h) \cdot \N} \|_{0,E}^2  &\lesssim  \| u - v_h \|_{1,\omega_E} + \| \Div(\Lam - \Mu_h)\|_{-1,\omega_E} + \sum_{T \subset \omega_E} \osc_T(f).
  \end{align*}
\end{thm}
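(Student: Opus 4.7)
The plan is to apply the standard bubble function technique of Verf\"urth, adapted to the present mixed formulation. Throughout I write $R_T := \Delta v_h + \Div \Mu_h + f$ and $R_T^h := \Delta v_h + \Div \Mu_h + f_h$, and $J_E := \jump{(\nabla v_h + \Mu_h)\cdot \N}$. Note that by triangle inequality $h_T\|R_T\|_{0,T} \le h_T\|R_T^h\|_{0,T} + \osc_T(f)$, so it suffices to bound the polynomial residual $R_T^h$.

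For the element estimate, I would test with the cubic bubble $w_T := b_T R_T^h$ extended by zero outside $T$, which satisfies $w_T \in H^1_0(T)$ together with the usual inverse-type bounds $\|R_T^h\|_{0,T}^2 \lesssim \int_T b_T (R_T^h)^2 \dx$, $\|w_T\|_{0,T}\lesssim \|R_T^h\|_{0,T}$ and $\|w_T\|_{1,T} \lesssim h_T^{-1}\|R_T^h\|_{0,T}$. Elementwise integration by parts and the continuous equation $(\nabla u + \Lam,\nabla w_T) = (f,w_T)$ (which is available since $w_T \in V$) yield
\begin{align*}
\int_T (R_T^h)^2 b_T \dx
&= \int_T \nabla(u-v_h)\cdot\nabla w_T \dx + \int_T (\Lam-\Mu_h)\cdot\nabla w_T \dx + \int_T (f_h-f)w_T \dx.
\end{align*}
Using the definition of the local dual norm $\|\Div(\Lam-\Mu_h)\|_{-1,T}$ for the middle term, Cauchy--Schwarz for the rest, and the bubble bounds on $w_T$, division by $\|R_T^h\|_{0,T}$ and multiplication by $h_T$ gives the desired element estimate.

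For the edge estimate, the same strategy works with the edge bubble: extend $J_E$ off $E$ into $\omega_E$ and multiply by the standard edge bubble $b_E$ to obtain $w_E \in H^1_0(\omega_E)$ with $\|J_E\|_{0,E}^2 \lesssim \int_E b_E J_E^2 \ds$, $\|w_E\|_{0,T}\lesssim h_E^{1/2}\|J_E\|_{0,E}$ and $\|w_E\|_{1,\omega_E}\lesssim h_E^{-1/2}\|J_E\|_{0,E}$. Summing the elementwise integration by parts identity over the two triangles in $\omega_E$ makes the interior contributions cancel except on $E$, and inserting the continuous equation tested against $w_E$ gives
\begin{align*}
\int_E J_E w_E \ds = \sum_{T\subset \omega_E}\!\int_T R_T w_E \dx + \sum_{T\subset \omega_E}\!\int_T \bigl(\nabla(v_h-u) + (\Mu_h-\Lam)\bigr)\cdot \nabla w_E \dx.
\end{align*}
The first sum is controlled by the already-proven element bound (substituting $\|R_T\|_{0,T}\le h_T^{-1}(\|u-v_h\|_{1,T}+\|\Div(\Lam-\Mu_h)\|_{-1,T}+\osc_T(f))$ and using $\|w_E\|_{0,T}\lesssim h_E^{1/2}\|J_E\|_{0,E}$ with $h_T\sim h_E$), while the second sum is handled by the $H^1_0(\omega_E)$-continuity of the dual norm and Cauchy--Schwarz, bounded by $(\|u-v_h\|_{1,\omega_E}+\|\Div(\Lam-\Mu_h)\|_{-1,\omega_E})h_E^{-1/2}\|J_E\|_{0,E}$. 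Dividing out $\|J_E\|_{0,E}$ and multiplying by $h_E^{1/2}$ yields the edge estimate.

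The main technical subtlety, beyond routine bubble scaling, is that $\Mu_h$ is only $L^2$ in general, so I must avoid any pointwise trace of $\Mu_h$ and use the dual norm $\|\Div(\Lam-\Mu_h)\|_{-1,\omega}$ rather than $\|\Lam-\Mu_h\|_{0,\omega}$; this is precisely what allows applying the mesh-dependent stability framework developed earlier in the paper. The fact that this dual-norm bound absorbs the edge-jump contribution cleanly through the $H^1_0(\omega_E)$ test function $w_E$ is the key step that makes the argument work.
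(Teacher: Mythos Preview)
Your proposal is correct and follows essentially the same approach as the paper: the standard Verf\"urth bubble technique, testing the polynomial residual against $b_T R_T^h$ on elements and against $b_E$ times an extension of $J_E$ on edge patches, then inserting the continuous equation and invoking the local dual norm for the $(\Lam-\Mu_h)$ contribution. The only cosmetic difference is that the paper scales the element test function by an extra factor $h_T^2$ (setting $\delta_T := h_T^2 b_T R_T^h$), which merely shifts where the $h_T$ factors appear in the algebra.
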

\begin{proof}
  The proof commences with the usual localizing technique by means of
  a element-wise qubic bubble function $b_T$. We define the localized error on $T$ by 
  \begin{align*}
    \delta_T|_T := h_T^2 b_T (\Delta v_h + \Div \Mu_h + f_h),
  \end{align*}
  and $\delta_T = 0$ on $\Omega \setminus T$. Since $b_T$ vanishes on
  the element boundary we have that $\delta_T \in V$. Using the norm
  equivalence for polynomial spaces we then have
  \begin{align*}
    &h_T^2 \| \Delta v_h + \Div \Mu_h + f_h \|^2_{0,T} \\
    &\lesssim h_T^2 \| b_T^{1/2} (\Delta v_h + \Div \Mu_h + f_h) \|^2_{0,T}\\
    &= ( \Delta v_h + \Div \Mu_h + f_h,  \delta_T)_T \\
    &= ( \Delta v_h + \Div \Mu_h,  \delta_T)_T + (f,  \delta_T)_T + (f_h - f,  \delta_T)_T \\
    &= ( \Delta v_h + \Div \Mu_h,  \delta_T)_T + (-\Delta u,  \delta_T)_T - \langle \Div \Lam,  \delta_T \rangle_{-1,T} + (f_h - f,  \delta_T)_T,
  \end{align*}
  and, with integration by parts also
  \begin{align} \label{eq:intermediate_vol}
    &h_T^2 \| \Delta v_h + \Div \Mu_h + f_h \|^2_{0,T} \nonumber  \\
    &\lesssim ( \nabla (v_h - u) , \nabla \delta_T)_T + \langle \Div (\Mu_h - \Lam),  \delta_T\rangle_{-1, T} + (f_h - f,  \delta_T)_T.
  \end{align}
  By the inverse inequality for polynomials we have 
  \begin{align} \label{eq::scaling_edge}
    \| \delta_T
  \|_{1, T} \lesssim h^{-1}_T \| \delta_T\|_{0, T}  \sim  h_T \| \Delta v_h + \Div \Mu_h + f_h \|_{0,T},
  \end{align}
  and thus, with Cauchy--Schwarz inequality we derive the
  first estimate with
  \begin{align*}
    &h_T^2 \| \Delta v_h + \Div \Mu_h + f_h \|^2_{0,T} \\
    &\lesssim \| u-v_h \|_{1,T} \| \delta_T\|_{1, T} \
    + \| \Div (\Lam - \Mu_h) \|_{-1,T} \| \delta_T \|_{1, T} + \osc_T(f) h_T^{-1} \| \delta_T\|_{0, T}.
  \end{align*}
  For the other term we proceed similarly. For this let $\delta_E :=
  b_E \mathcal{E}(\jump{ (\nabla v_h + \Mu_h) \cdot \N})$ where
  $\mathcal{E}$ is the well known extension operator onto
  $H^1_0(\omega_E)$, see \cite{braess}, and $b_E$ is the quadratic edge bubble. Scaling arguments and the Poincar\'e inequality
  give 
  \begin{align*}
    \| \jump{ (\nabla v_h + \Mu_h) \cdot \N} \|_{0,E} \sim h_E^{-1/2} \| \delta_E \|_{0,E} \sim h_E \| \delta_E \|_{1,\omega_E}.
  \end{align*}
With the same steps as for the volume term we derive the estimate
\begin{align} \label{eq:intermediate_edge}
\begin{aligned} 
  \| \jump{ (\nabla v_h + \Mu_h) \cdot \N} \|_{0,E}^2
  \lesssim& (\Delta v_h +  \Div \Mu_h + f, \delta_E)_{\omega_E} \\
  &+ (\nabla(v_h - u), \delta_E)_{\omega_E} 
  + \langle \Div (\Mu_h - \Lam),  \delta_E  \rangle_{-1, \omega_E},
\end{aligned}
\end{align}
from which we conclude the proofs using the Cauchy--Schwarz inequality,
the estimates of the volume term from before and
\eqref{eq::scaling_edge}.
\end{proof}

\begin{lem}  \label{lem:globaleff}
  Let $v_h \in V_h$ and $\Mu_h \in \Lamspace_h$ be arbitrary. There
  holds the global efficiency
  \begin{align*}
    \Big( \sum_{T \in Th} h^2_T \| \Delta v_h + \Div \Mu_h + f \|^2_{0,T} \Big)^{1/2}&\lesssim \| u - v_h \|_{1} + \| \Div(\Lam - \Mu_h)\|_{-1} + \osc(f), \\
    \Big( \sum_{E \in \Eh} h_E \| \jump{ (\nabla v_h + \Mu_h) \cdot \N} \|^2_{0,E}\Big)^{1/2} &\lesssim  \| u - v_h \|_{1} + \| \Div(\Lam - \Mu_h)\|_{-1} +  \osc(f).
  \end{align*}
\end{lem}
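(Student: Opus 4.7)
The plan is to square each local estimate of Theorem~\ref{thm:localeff} and sum over $T$ (or $E$); only the passage from the sum of local dual norms to the global one needs a separate argument. Once I have the two inequalities
\begin{align*}
  \sum_{T \in \Th} \| \Div(\Lam - \Mu_h) \|_{-1,T}^2 &\lesssim \| \Div(\Lam - \Mu_h) \|_{-1}^2, \\
  \sum_{E \in \Eh} \| \Div(\Lam - \Mu_h) \|_{-1,\omega_E}^2 &\lesssim \| \Div(\Lam - \Mu_h) \|_{-1}^2,
\end{align*}
together with the finite overlap of the edge patches, which gives $\sum_E \| u - v_h \|_{1,\omega_E}^2 \lesssim \| u - v_h \|_1^2$ and $\sum_E \sum_{T \subset \omega_E} \osc_T(f)^2 \lesssim \osc(f)^2$, both conclusions follow directly from a term-by-term squared summation of Theorem~\ref{thm:localeff}.

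For the first summation bound I would, for each $T$, pick $w_T \in H^1_0(T)$ with $\| w_T \|_{1,T} = 1$ and $(\nabla w_T, \Lam - \Mu_h)_T \ge \tfrac{1}{2} \| \Div(\Lam - \Mu_h) \|_{-1,T}$, set $c_T := \| \Div(\Lam - \Mu_h) \|_{-1,T}$, extend each $w_T$ by zero to $\Omega$, and test the global dual norm against $v := \sum_T c_T w_T$. Disjoint supports make both $\| v \|_1^2 = \sum_T c_T^2$ and $(\nabla v, \Lam - \Mu_h) \ge \tfrac{1}{2}\sum_T c_T^2$, so the definition of $\| \Div(\Lam - \Mu_h) \|_{-1}$ yields the inequality.

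The main obstacle is the edge-patch version, since now the supports overlap. I would repeat the same construction with $w_E \in H^1_0(\omega_E)$ and $c_E := \| \Div(\Lam - \Mu_h) \|_{-1,\omega_E}$, but replace additivity of the $H^1$ norm by a pointwise Cauchy--Schwarz: shape regularity bounds by a constant $N$ the number of patches containing any given point, whence $|\sum_E c_E w_E|^2 \le N \sum_E c_E^2 |w_E|^2$ pointwise and, after integration, $\| v \|_1^2 \lesssim N \sum_E c_E^2$. The rest of the argument transfers verbatim, and the same overlap bookkeeping handles the other patch-localized terms on the right-hand side. With these ingredients, the proof reduces to summing the squared estimates of Theorem~\ref{thm:localeff}.
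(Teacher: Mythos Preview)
Your argument is correct, but it is organized differently from the paper's. The paper explicitly instructs to go back to the \emph{intermediate} estimates \eqref{eq:intermediate_vol} and \eqref{eq:intermediate_edge}, i.e.\ the inequalities \emph{before} the local Cauchy--Schwarz step, and sum those. Concretely, one sets $\delta := \sum_T \delta_T$ (respectively $\sum_E h_E \delta_E$), observes that the dual pairings add up to $\langle \Div(\Mu_h-\Lam),\delta\rangle_{-1}$ over the whole of $\Omega$, and then applies a single global Cauchy--Schwarz together with $\|\delta\|_1^2 \lesssim \sum_T \|\delta_T\|_{1,T}^2$, which follows from disjoint supports (finite overlap in the edge case). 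This is the route taken in \cite{MR3723328}.

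Your route instead sums the \emph{final} local efficiency bounds of Theorem~\ref{thm:localeff} and isolates, as a standalone lemma, the superadditivity-type inequality
\[
\sum_{T}\|\Div\Psi\|_{-1,T}^2 \;\lesssim\; \|\Div\Psi\|_{-1}^2,
\qquad
\sum_{E}\|\Div\Psi\|_{-1,\omega_E}^2 \;\lesssim\; \|\Div\Psi\|_{-1}^2,
\]
proved by building the very same kind of global test function out of local near-maximizers. Both approaches hinge on the same mechanism (a globally assembled $H^1_0$ test function with finitely overlapping supports), so the difference is mainly one of packaging: the paper's version avoids stating an auxiliary lemma but ties the global argument to the specific bubble functions, whereas your version extracts a reusable inequality at the price of an extra (short) proof. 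Either is acceptable here; just be aware that when the paper says ``same steps as in \cite{MR3723328} using \eqref{eq:intermediate_vol}--\eqref{eq:intermediate_edge}'' it means the first variant.
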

\begin{proof}
  The proof follows with the same steps as in \cite{MR3723328} using
  the intermediate estimates \eqref{eq:intermediate_edge} and
  \eqref{eq:intermediate_vol}.
\end{proof}
\section{Numerical examples} \label{sec::num}

We apply an iterative algorithm to approximate the solution of the discrete
problem~\eqref{eq:discproblem}.  It is based on a reformulation of the inequality constraint
\eqref{eq:second} as
\begin{equation}
  \label{eq:eqconstraint}
  \Lam - \bs{P}(\Lam + \rho \nabla u) = \bs{0}, \quad \rho > 0,
\end{equation}
where $\bs{P}(\Mu) = \tfrac{\Mu}{\max(1, |\Mu|)}$ scales any vectors of
$\mathbb{R}^2$ to maximum length one, cf.~\cite{MR737005, he2000steady}
for discussion on similar algorithms and proofs of their convergence.
The reformulation is based on the fact that $\Lam_h$ in
\[
    (\X_h - \Lam_h, \Mu_h - \Lam_h) \leq 0 \quad \forall \Mu_h \in \Lamspace_h,
\]
is the orthogonal projection of $\X_h \in \Q_h$ onto $\Lamspace_h$, and the orthogonal projection is alternatively characterized
by $\bs{P}$~\cite[Section 3]{he2000steady}.

\begin{algo}[Uzawa iteration]
  \label{alg:uzawa}
  Let $(u_h^0, \Lam_h^0) \in V_h \times \Lamspace_h$
  be an initial guess, $TOL$ a given tolerance and set $i=1$
  \begin{enumerate}
  \item Solve $u_h^i$ from $(\mu \nabla u_h^i, \nabla v_h) = (f, v_h) - g(\Lam_h^{i-1}, \nabla v_h)$ for every $v_h \in V_h$.
  \item Calculate $\Lam_h^i = \bs{P}(\Lam_h^{i-1} + \rho \pi_h \nabla u_h^i)$ where $\pi_h : \Q \rightarrow \bs{Q}_h$ is the $L^2$ projection onto $\bs{Q}_h$.
  \item Stop if $\|\nabla(u_h^{i} - u_h^{i-1})\|_0 / \| \nabla u_h^{i-1} \|_0< TOL$.  If not, increment
    $i$ and go to step (1).
  \end{enumerate}
\end{algo}

We first attempt to approximate an analytical solution on a circle
$\Omega = \{ (x,y) \in \mathbb{R}^2 : x^2 + y^2 < R^2 \}$ using
uniform mesh refinements;
see Figure~\ref{fig:meshes} for the sequence of meshes.
For constant loading $f$, the
coincidence set is a smaller circle with the radius $R_p = 2g/f$.  The
analytical solution reads $u(r) = \frac{R - r}{2}( \frac{f}{2}(R + r) - 2g)$
when $r > R_p$ and is equal to the constant $u(R_p)$ when $r \leq R_p$.
Substituting the above expression into the strong formulation \eqref{eq:exactprob_one}
we find also an analytical expression for the divergence of $\Lam$.

The error of the different components of the discrete norm are given in
Figure~\ref{fig:apriori} with $TOL = 10^{-7}$, $R=1$, $g=0.1$, $f=0.5$
and $\rho = 10$ in accordance to the suggestion in \cite[Remark 3]{he2000steady}.
We observe that for the MINI and $P^2P^0$ methods all components converge at least linearly
whereas for $P^3P^1$ method the $H^1$ seminorm of $u-u_h$ is approximately
$\mathcal{O}(h^{1.7})$ and the discrete norm of $\Lam - \Lam_h$ is
approximately $\mathcal{O}(h^{1.6})$, i.e.~less 
than the quadratic convergence order that interpolation 
estimates would imply for a completely smooth solution.

Next, our aim is to improve the convergence rate with respect to the
total number of degrees-of-freedom $N$ using mesh adaptivity.
We use an adaptive mesh sequence based on the a posteriori estimate
of Section~\ref{sec::aposteriori}.
An element-wise error estimator $E_T$ is given by
\[
E_T^2 = \eta_T^2 + \sum_{\substack{E \in \Eh, \\ E \cap \partial T \not= \emptyset}} (\tfrac12 \eta_E)^2 + \eta_{\con}^2,
\]
and we split $T^\prime \in \Th$ if
\[
E_{T^\prime} > 0.5 \max_{T \in \Th} E_{T}.
\]
The mesh is refined using the red-green-blue refinement strategy
and Laplacian smoothing is applied on the refined mesh
to improve its shape regularity.
Some examples from the sequence of adaptive meshes are
given in Figure~\ref{fig:adaptmeshes}.
A comparison of the error between the uniform and adaptive
mesh sequences is given in Figure~\ref{fig:aposteriori}.
In particular, we observe that while the convergence rate of the
error is ultimately dictated by the largest component of the discrete norm
(i.e.~$\|\Lam - \Lam_h\|_{-1,h}$),
there is a visible improvement
in all of the components and, as a conclusion, the quadratic
rate is recovered with respect to the number of degrees-of-freedom.

Finally, we consider an example
in a square domain $\Omega = (0,1)^2$
with $f=3.6$, $g=1.25$, $\rho=1.5$, and no analytical solution; cf.~\cite{MR2855533, saramito2001adaptive} for similar examples.
Some meshes from the adaptive sequence and the total error estimators are given in Figure~\ref{fig:square}.
The final discrete solution is depicted in Figure~\ref{fig:squareconv}.
As before, we observe the adaptive refinement
focusing on the interfaces between
the liquid and solid
regions.  Moreover, the estimators successfully
locate and refine the so-called
stagnating regions at
corners of the square.

\begin{rem}
We used a quadratic representation of the circle boundary
in order to neglect the effect of inexact geometry representation.
\end{rem}

\begin{rem}
We found the following equivalent form of the estimator $\eta_{\con}$
to be more robust against numerical tolerances in Algorithm~\ref{alg:uzawa}:
\[
g\int_T (| \nabla u_h| - \bs{P}(\Lam_h + \rho \pi_h \nabla u_h) \cdot \pi_h \nabla u_h) \dx.
\]
\end{rem}

\begin{rem}
We consider methods only up to a linear Lagrange multiplier because for a higher order
method, in general, $\Lam_h \not\in \Lamspace_h$ when using Algorithm~\ref{alg:uzawa}.
\end{rem}

\section*{Acknowledgements}

The numerical results were created using scikit-fem~\cite{gustafsson2020scikit}
which relies heavily on NumPy~\cite{harris2020array},
SciPy~\cite{virtanen2020scipy} and Matplotlib~\cite{hunter2007matplotlib}.
The work was supported by the Academy of Finland (decisions 324611 and
338341).

\begin{figure}
  \includegraphics[width=0.49\textwidth]{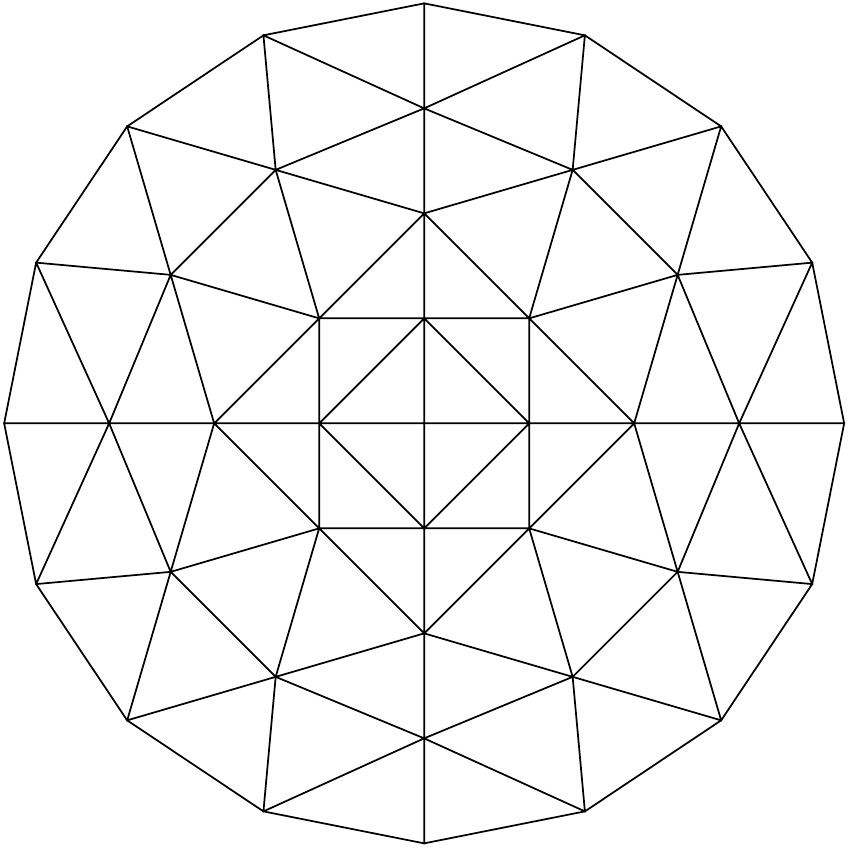}
  \includegraphics[width=0.49\textwidth]{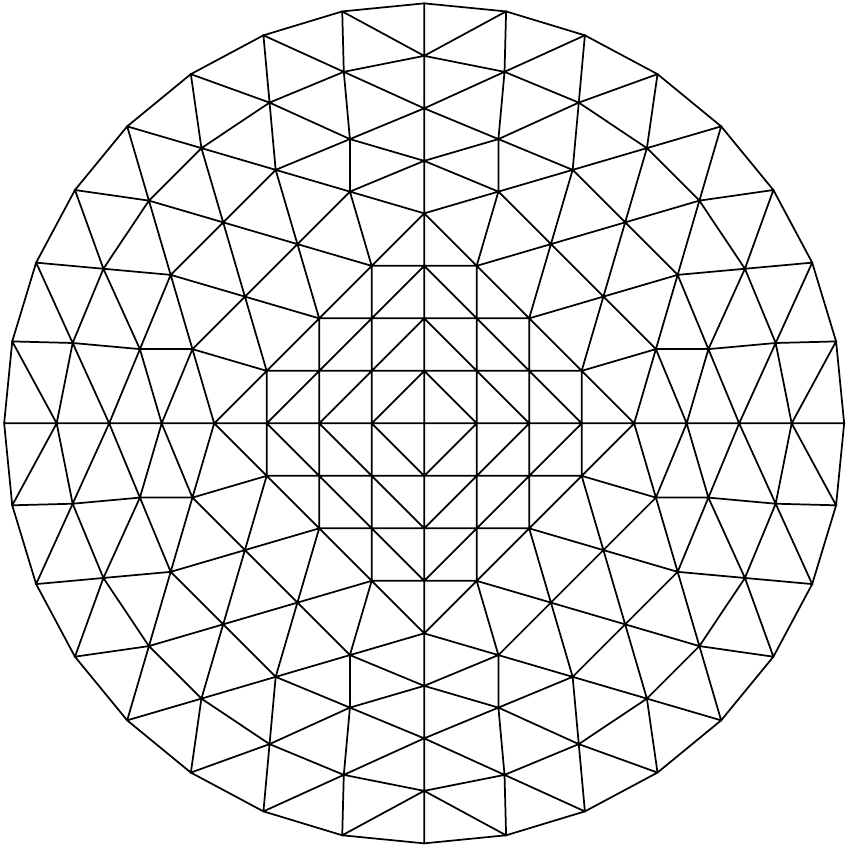}\\
  \includegraphics[width=0.49\textwidth]{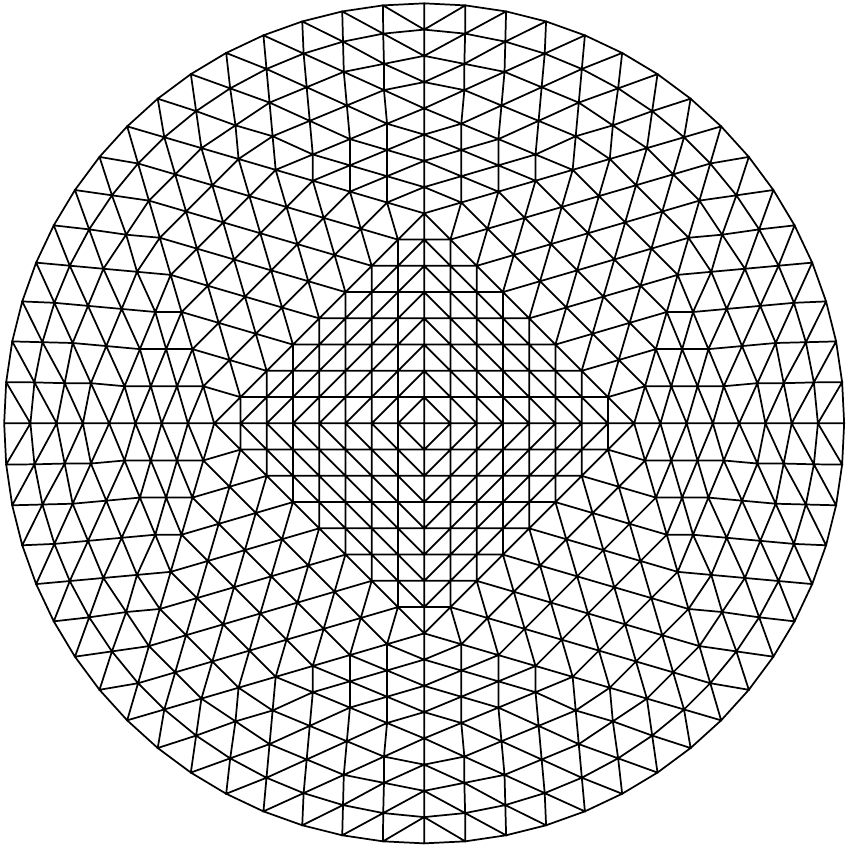}
  \includegraphics[width=0.49\textwidth]{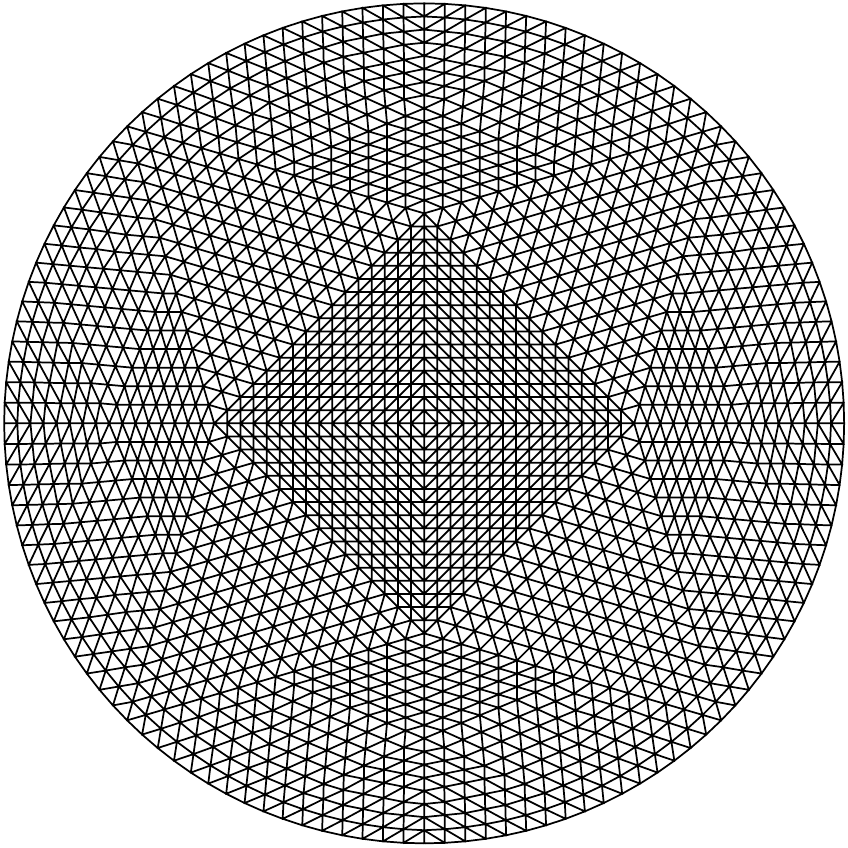}
  \caption{The sequence of uniformly refined meshes for the convergence study.}
  \label{fig:meshes}
\end{figure}

\begin{figure}
  \includegraphics[width=0.9\textwidth]{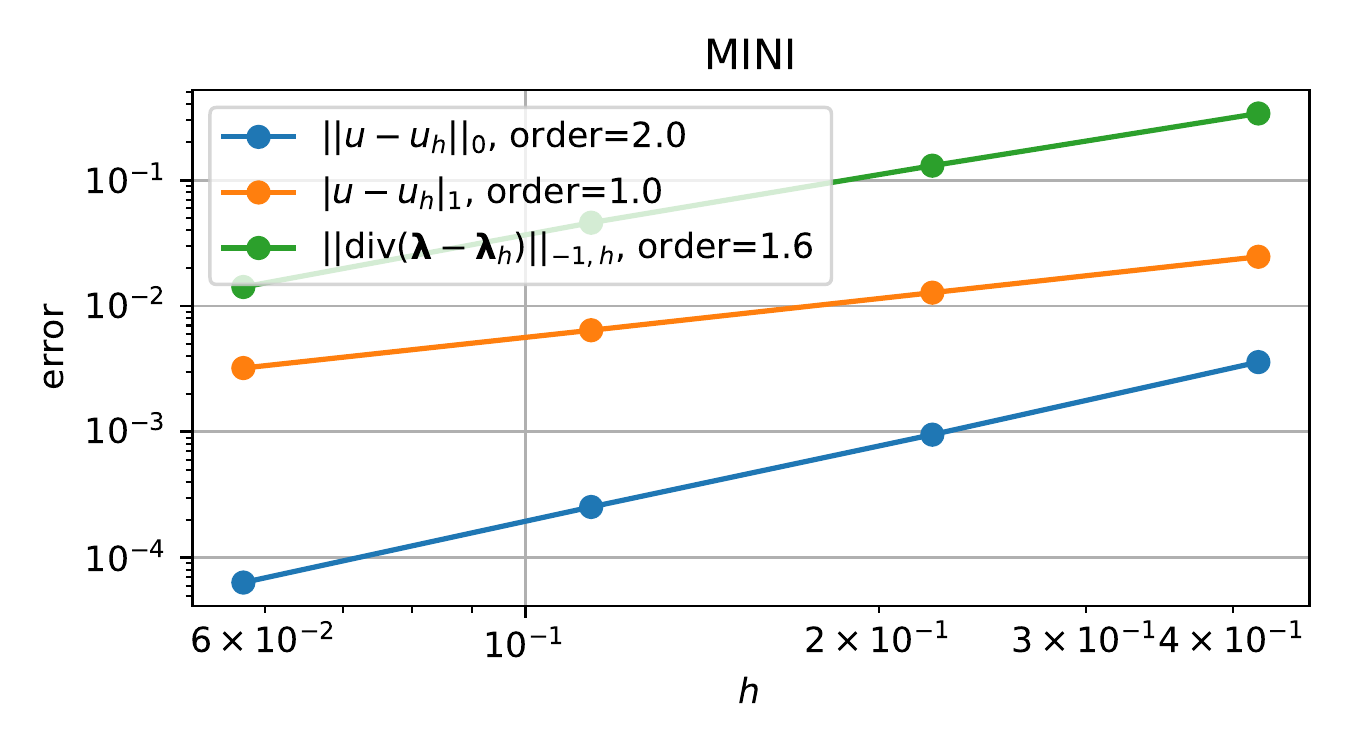}\\
  \includegraphics[width=0.9\textwidth]{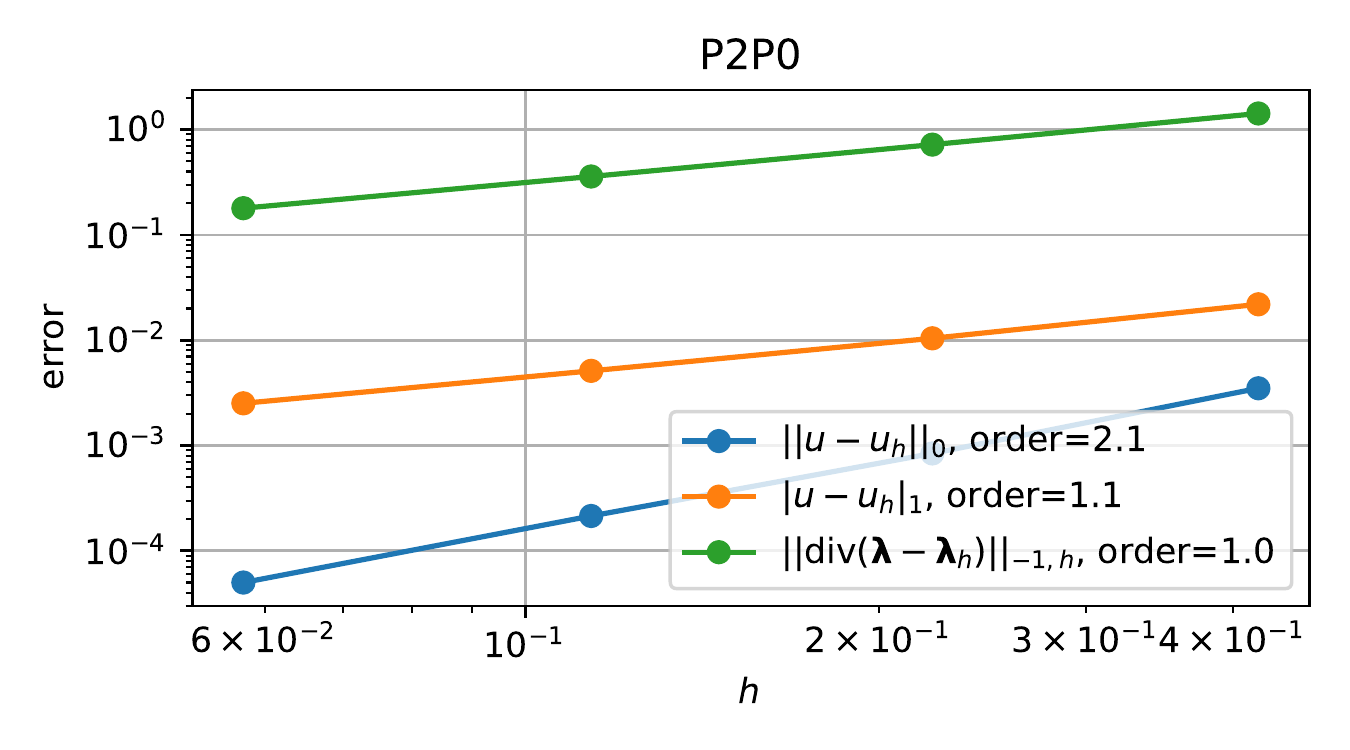}\\
  \includegraphics[width=0.9\textwidth]{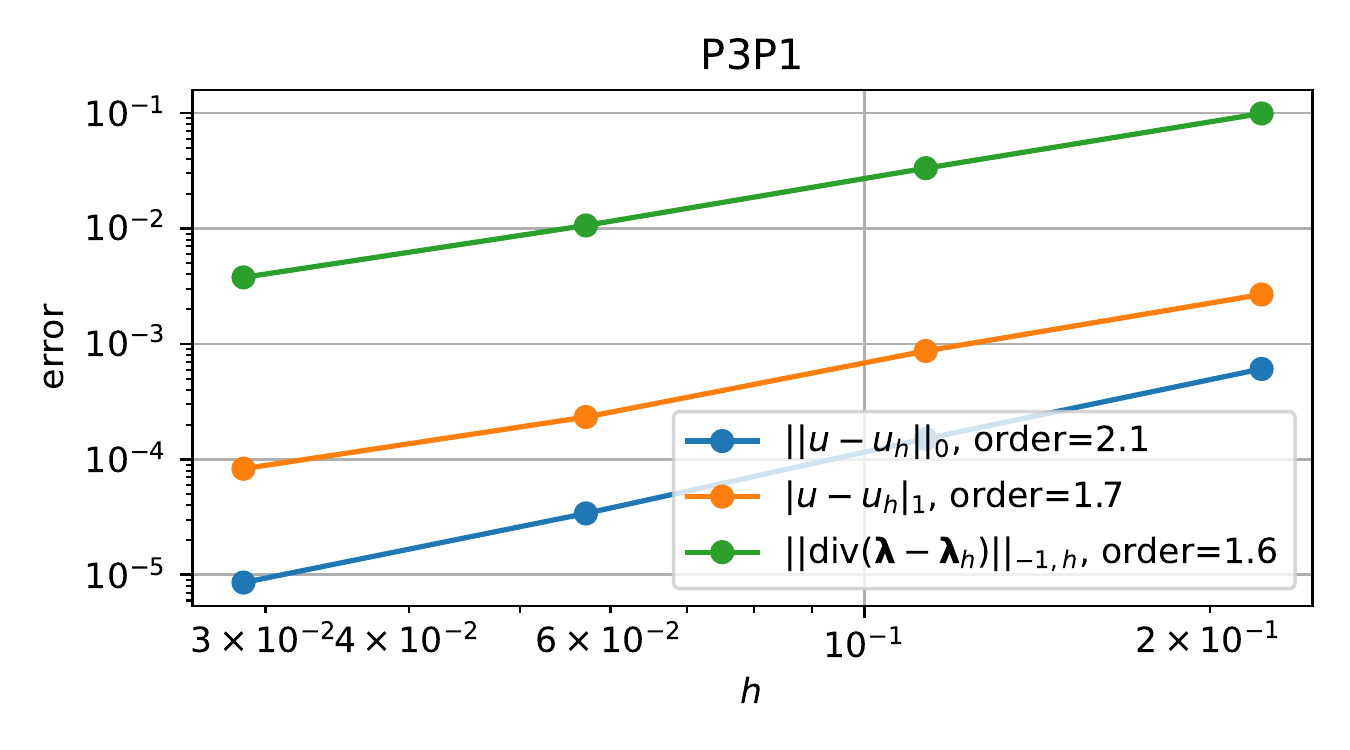}
  \caption{Error in the different components of the discrete norm for the
    circle problem as a function of the mesh parameter $h$ using the uniform
    mesh sequence.}
  \label{fig:apriori}
\end{figure}

\begin{figure}
  \includegraphics[width=0.49\textwidth]{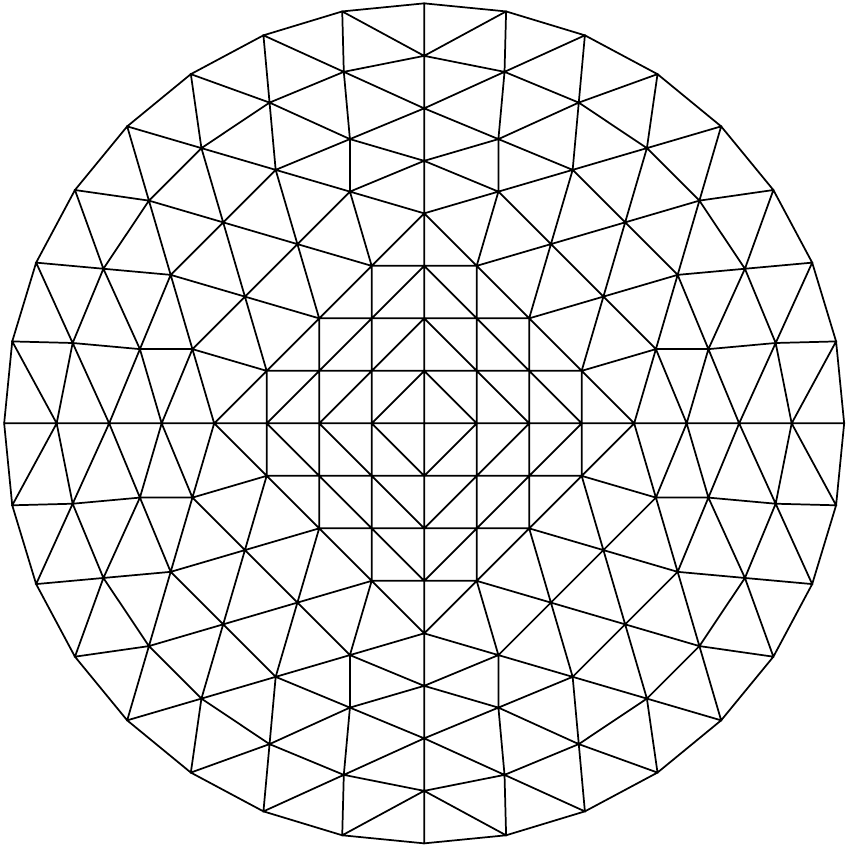}
  \includegraphics[width=0.49\textwidth]{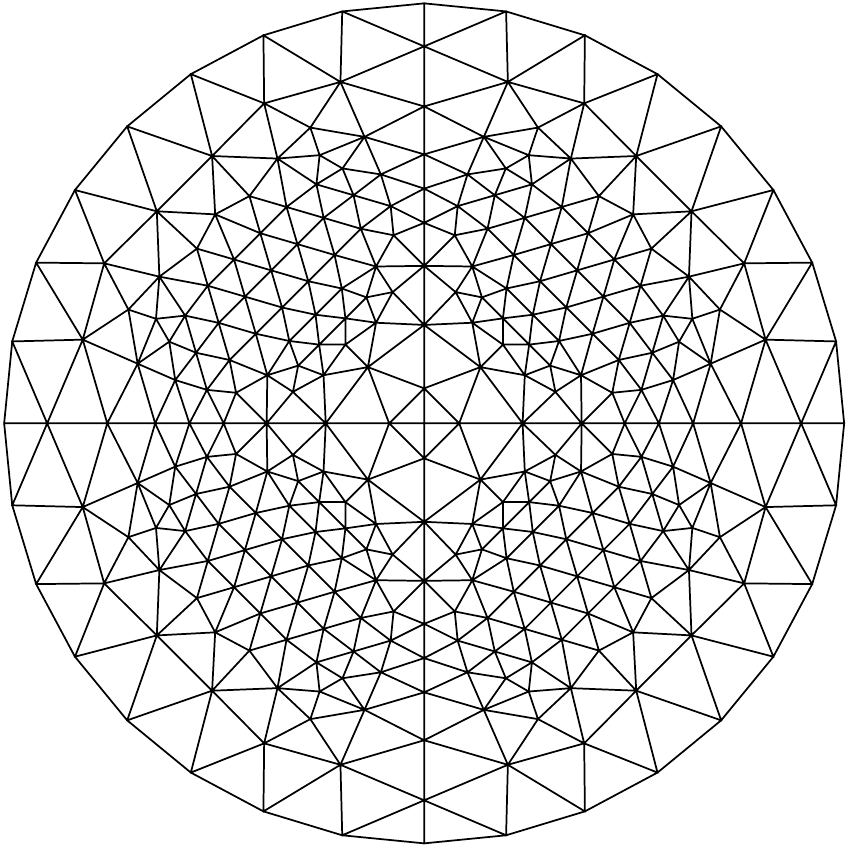}\\
  \includegraphics[width=0.49\textwidth]{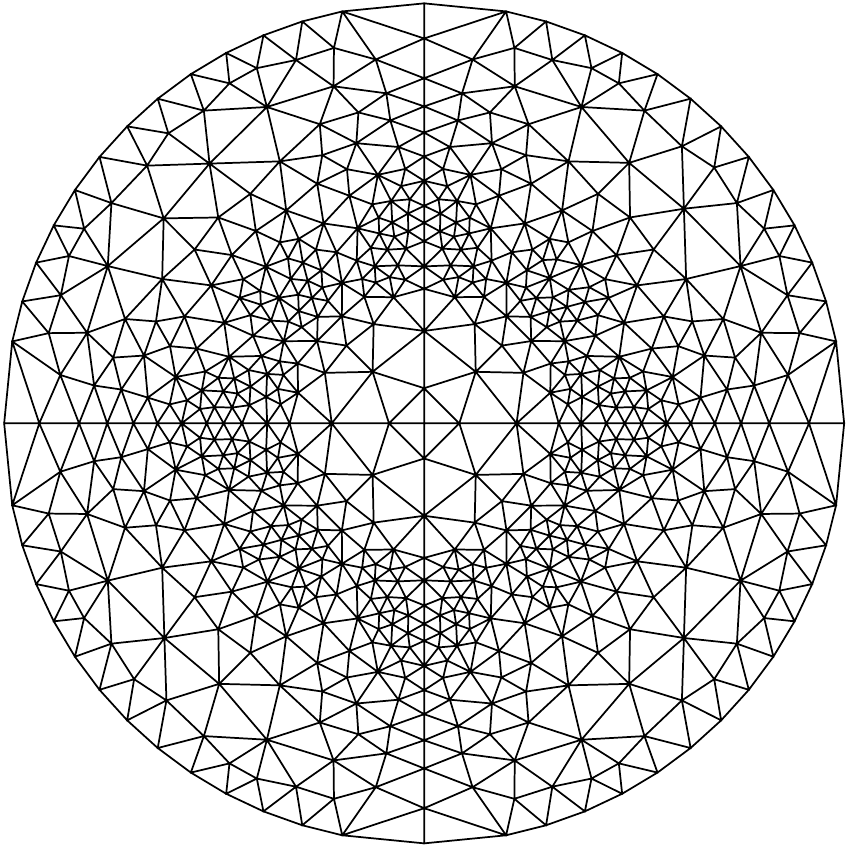}
  \includegraphics[width=0.49\textwidth]{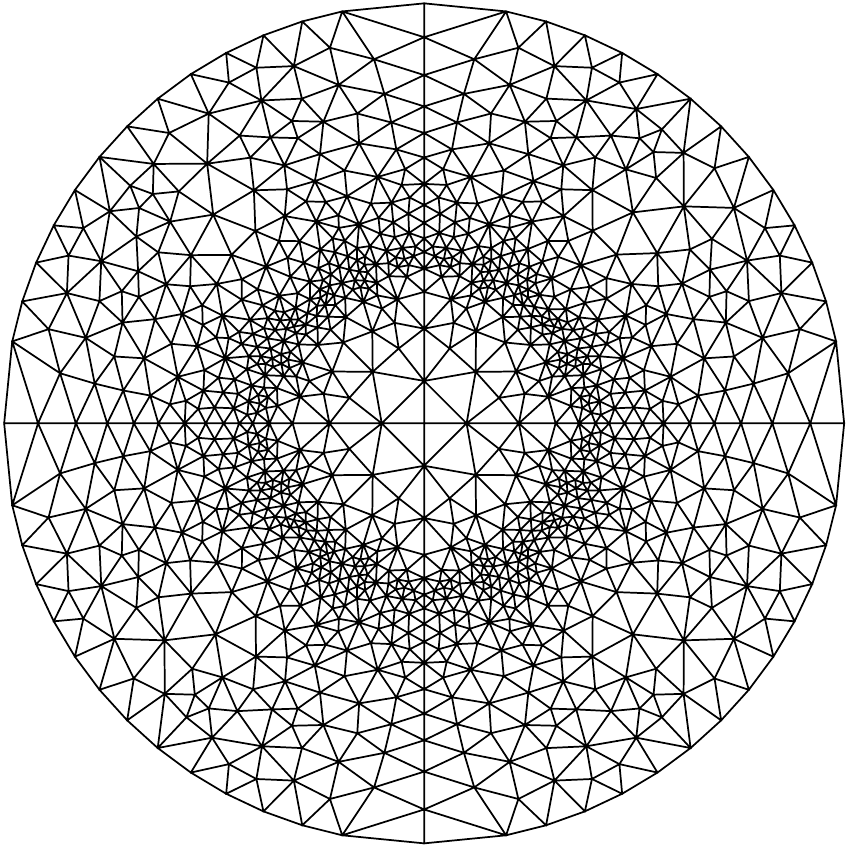}
  \caption{Some examples from the sequence of adaptively refined meshes for the circle problem.}
  \label{fig:adaptmeshes}
\end{figure}

\begin{figure}
  \includegraphics[width=\textwidth]{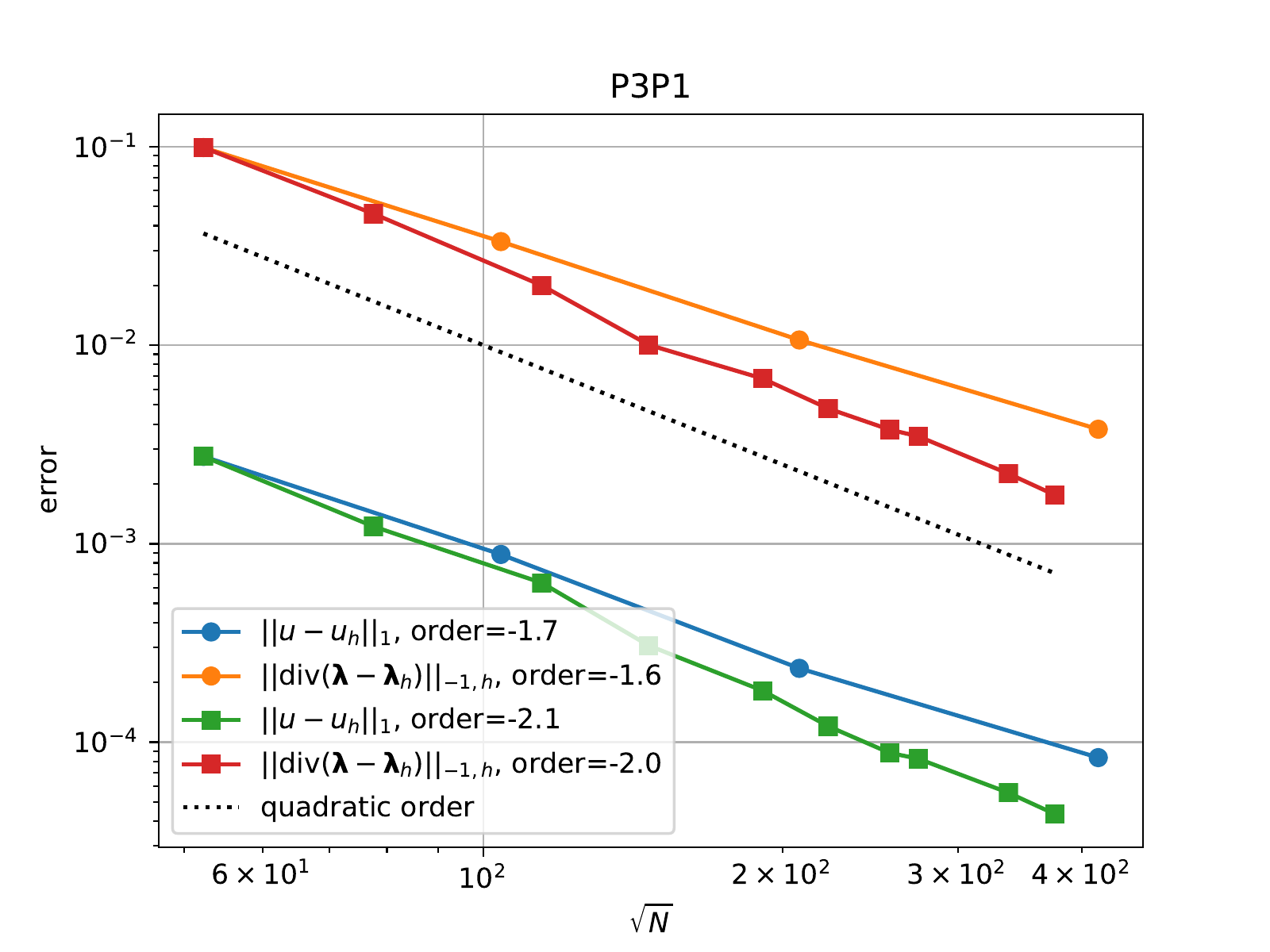}

  \caption{Error in the different components of the discrete norm for the
    circle problem using $P^3P^1$ method.  The horizontal axis is the square root of
    the total number of degrees-of-freedom $N$.  A comparison is made between
    the uniform mesh sequence (circles) and the adaptive mesh sequence
    (squares).}
  \label{fig:aposteriori}
\end{figure}

\begin{figure}
    \centering
    \includegraphics[width=0.4\textwidth]{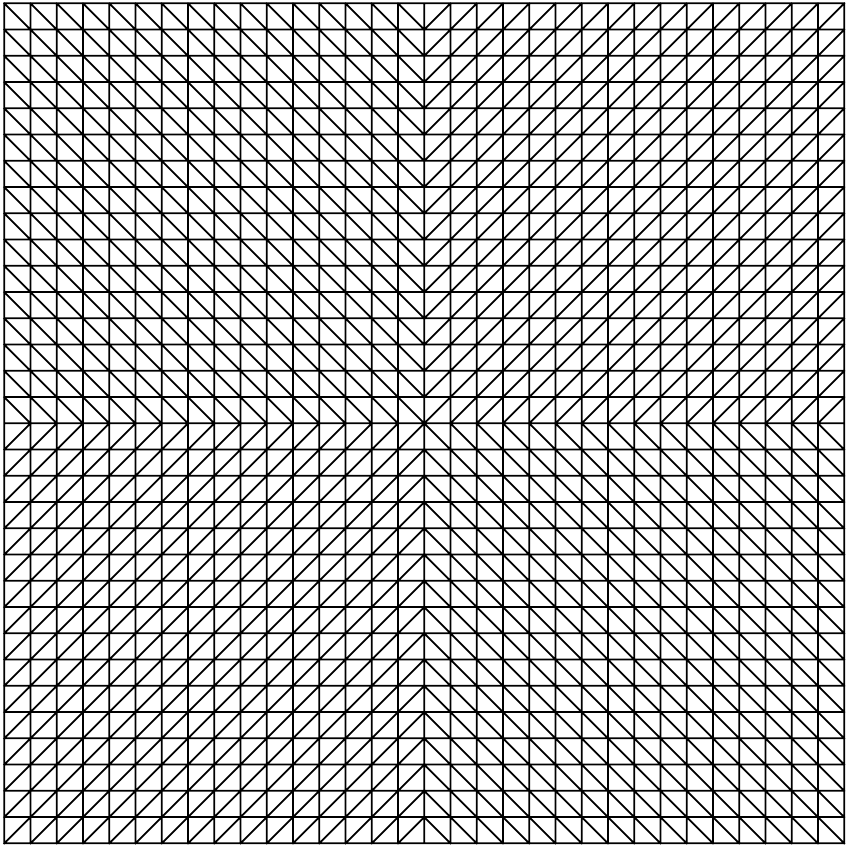}
    \hspace{0.5cm}
    \includegraphics[width=0.4\textwidth]{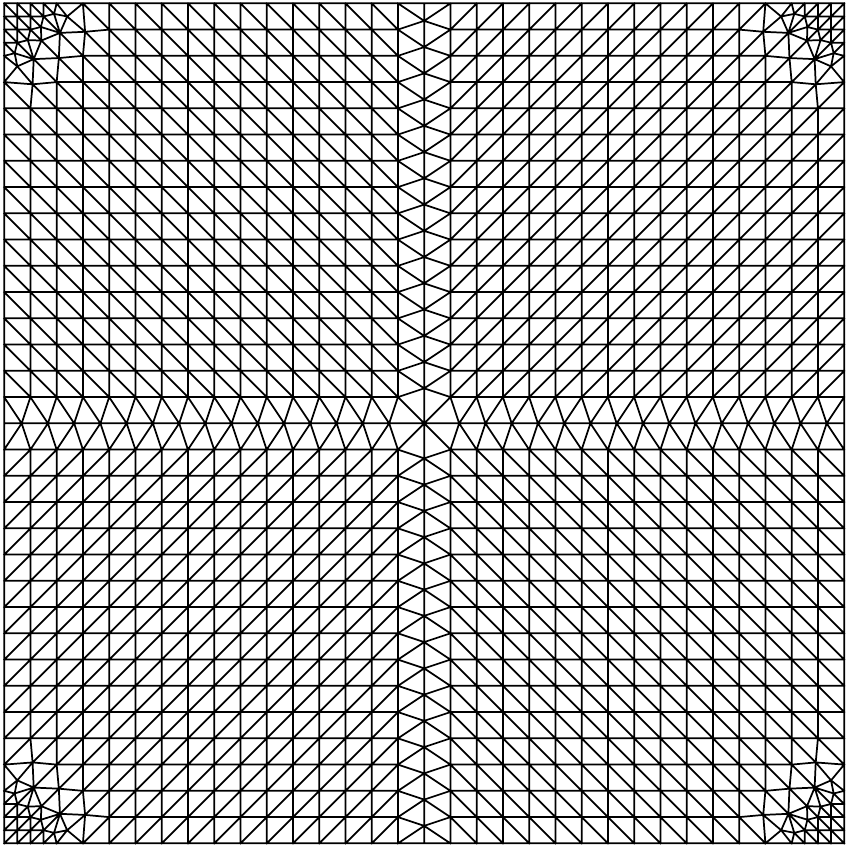}\\[0.6cm]
     \includegraphics[width=0.4\textwidth]{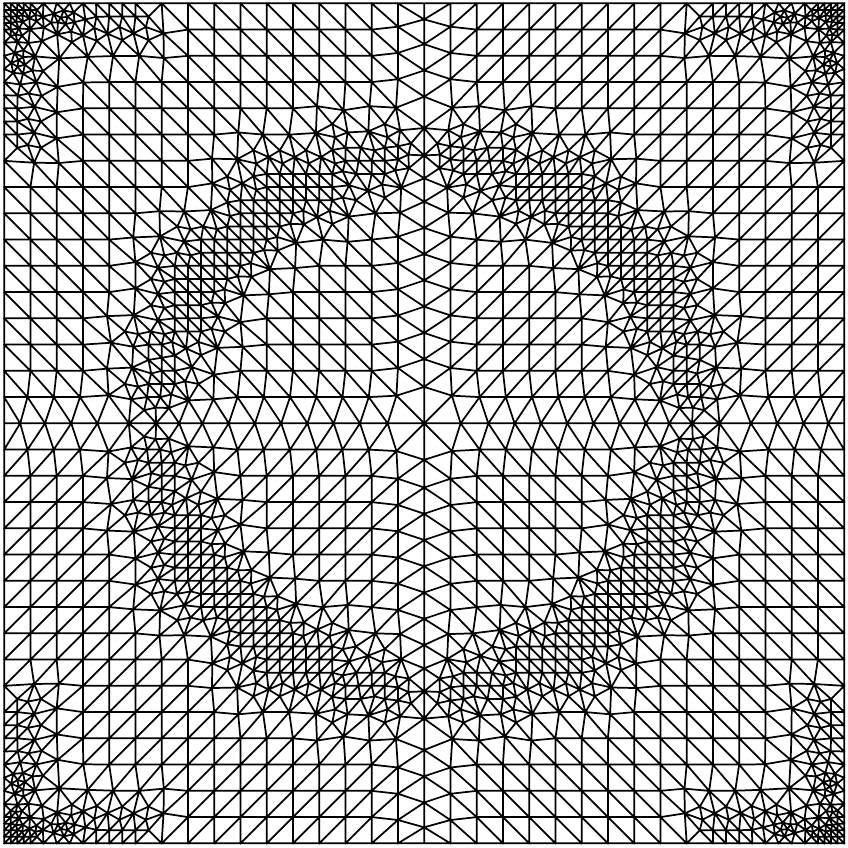}
     \hspace{0.5cm}
      \includegraphics[width=0.4\textwidth]{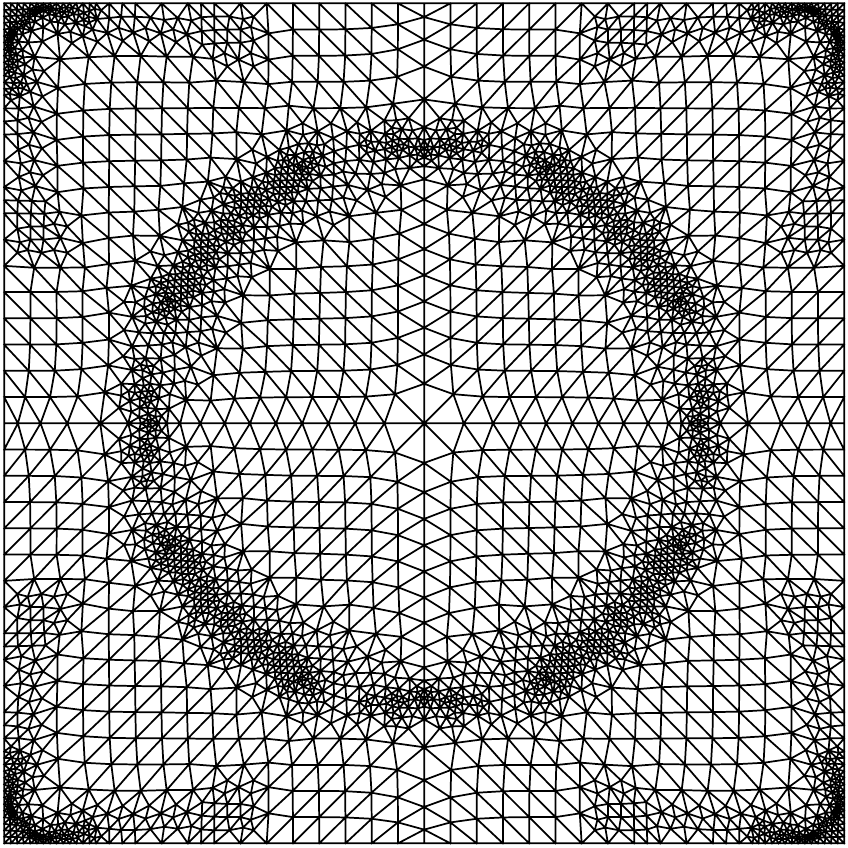}
      \\[0.3cm]
    \includegraphics[width=0.8\textwidth]{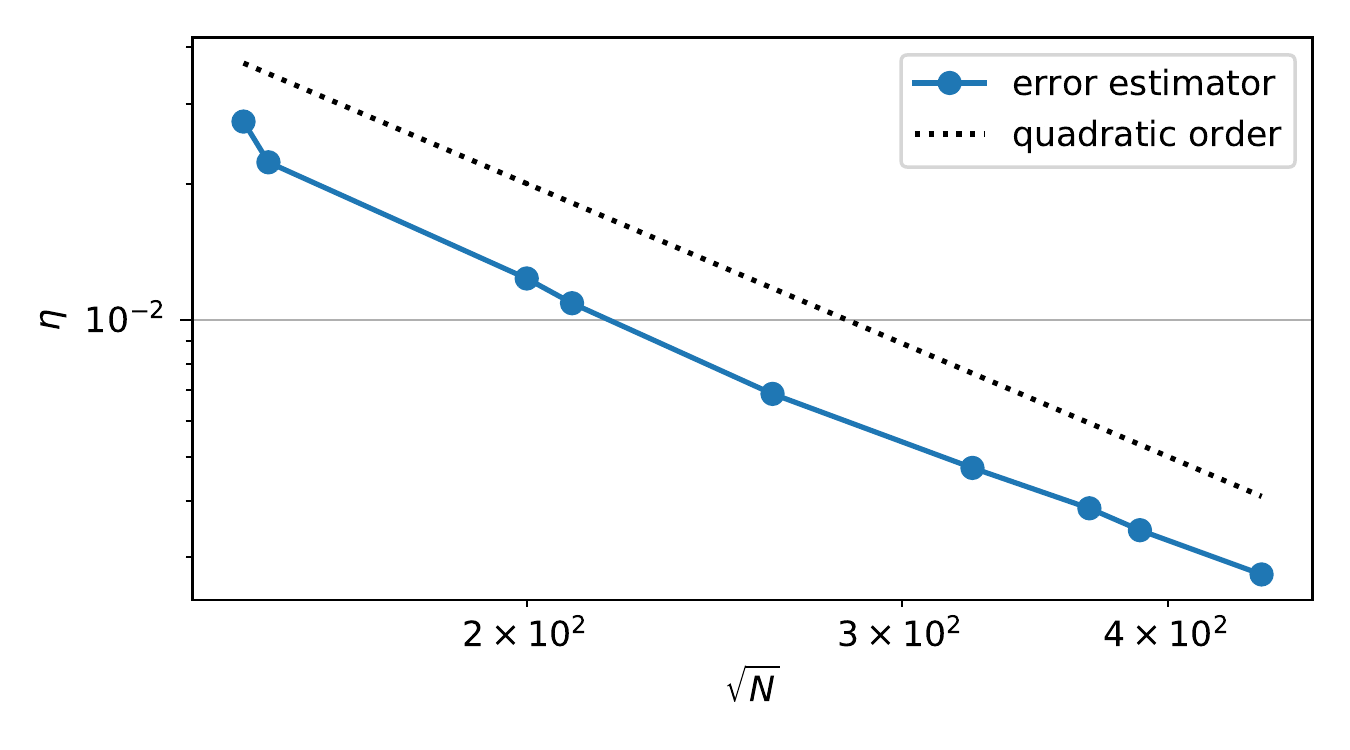}
    \caption{Some examples from the sequence of adaptively refined meshes for the square problem, and the total error estimator $\eta$.}
    \label{fig:square}
\end{figure}

\begin{figure}
    \centering
    \includegraphics[width=0.8\textwidth]{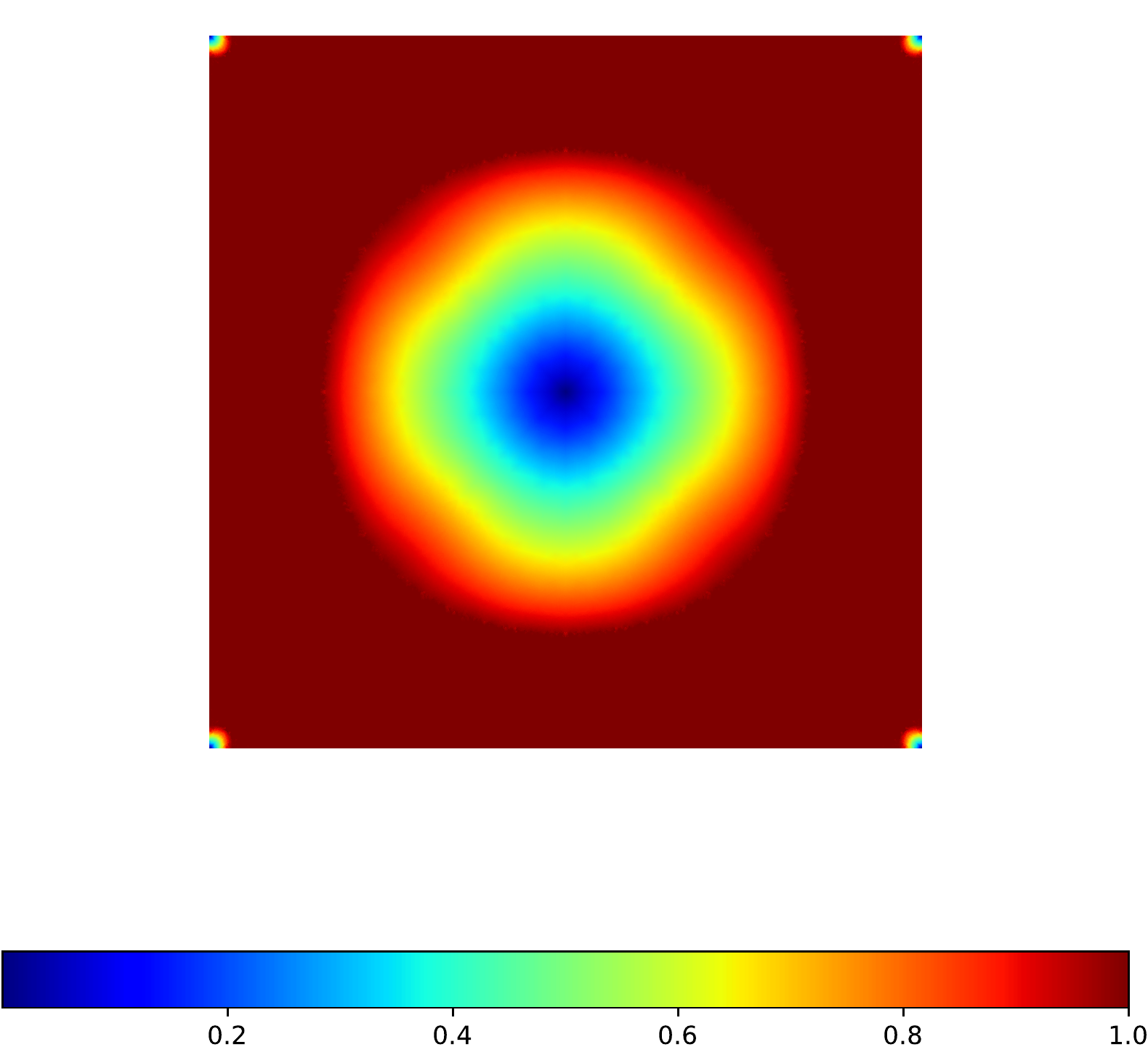}\\[1cm]
    \includegraphics[width=0.8\textwidth]{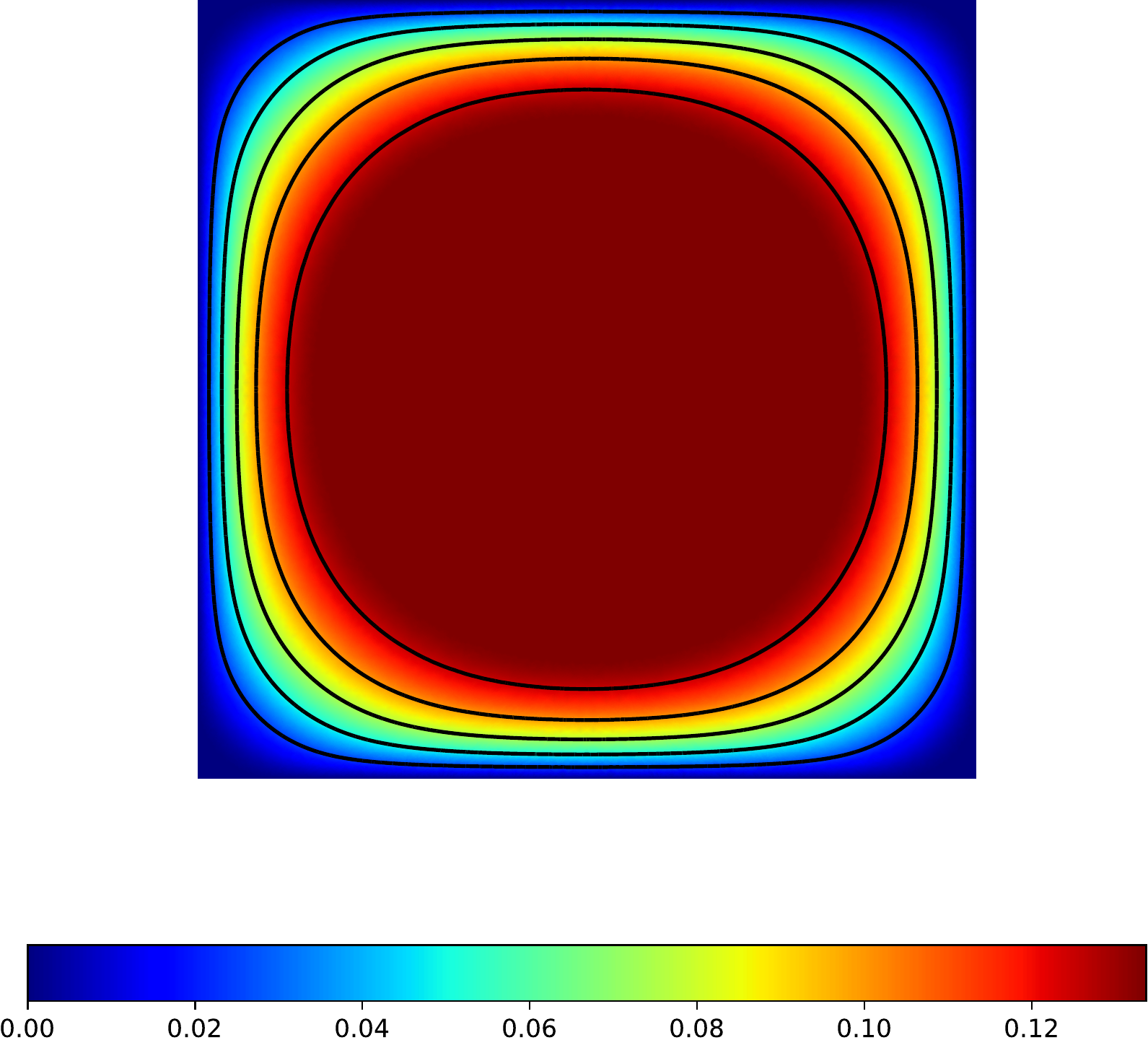}
    \caption{The length of the discrete Lagrange multiplier $|\Lam_h|$ and the discrete velocity $u_h$ for the square problem.}
    \label{fig:squareconv}
\end{figure}

\bibliographystyle{siam}
\bibliography{literature}

\end{document}